\newtheorem{theorem}{Theorem}
\newtheorem{prop}[theorem]{Proposition}
\newtheorem{lemma}[theorem]{Lemma}
\newtheorem{corollary}[theorem]{Corollary}
\newtheorem{rmk}[theorem]{Remark}
\def\R{\mathbb{R}}
\def\Z{\mathbb{Z}}
\begin{document}
\title[Flows of piecewise analytic vector fields]
{Flows of piecewise analytic vector fields in convex polytope decompositions}

\renewcommand{\theequation}{\arabic{section}.\arabic{subsection}.\arabic{equation}}
\numberwithin{equation}{section}
\numberwithin{theorem}{section}

\author{Tianqi Wu}
\maketitle

\begin{abstract}
We prove that for a convex polytope decomposition of a domain in $\mathbb R^n$, an integral curve of a piecewise analytic vector field is chopped by the decomposition into finitely many pieces. As a consequence, we prove the finiteness of the number of the edge flips in a discrete Yamabe flow.

\end{abstract}

\tableofcontents

\newpage
\section{Introduction}$ $
Gu et al. \cite{gu2018discrete} introduced a discrete Yamabe flow with edge flips, which are combinatorial changes of triangulations. It would be theoretically interesting and practically useful to know if there are only finitely many edge flips during the flow.

In this paper we prove that for a convex polytope decomposition of a domain in $\mathbb R^n$, an integral curve of a piecewise analytic vector field is chopped by the decomposition into finitely many pieces. As a consequence, we prove the finiteness of the edge flips in a discrete Yamabe flow mentioned above.

\subsection{Setup and the main theorem}

(a)
A subset $H$ of $\mathbb R^n$ is called a \emph{half space} if 
$$
H=\{x\in\mathbb R^n:a_1x_1+...+a_nx_n\geq b\}
$$
for some nonzero $(a_1,...,a_n)\in\mathbb R^n$ and $b\in\mathbb R$.
A subset $D$ of $\mathbb R^n$ is called a \emph{convex polytope} if $D$ has nonempty interior and is the intersection of finitely many half spaces.

(b)
Given $U\subseteq\mathbb R^n$, a \emph{vector field} on $U$ is a function from $U$ to $\mathbb R^n$.

(c)
Given $U\subseteq\mathbb R^n$, a function $f$ on $U$ is called \emph{analytic} if there exists an open superset $\tilde U$ of $U$ and an analytic function $\tilde f$ on $\tilde U$ such that $\tilde f\equiv f$ on $U$.

(d) Given $M\subseteq\mathbb R^n$, a flow on $M$ is a continuous function from $[T,\infty)$ to $M$ for some $T\in\mathbb R$.

Here is our main theorem.
\begin{theorem}
Let $U$ be an open domain in $\mathbb R^n$
and $U\subseteq\cup_i D_i$ be a finite closed cover such that each $D_i$ is a convex polytope. 
Suppose $M\ni0$ is an analytic submanifold in $U$ and $V(x)$ is a $C^1$ vector field on $U$ such that 
\begin{enumerate}[label=(\roman*)]
    \item $V(x)$ is analytic on each $D_i\cap U$,
    \item $DV(x)(T_xM)\subseteq T_xM$ for all $x\in M$,
    \item $V(0)=0$, and
    \item $DV(0)|_{T_0M}$ is diagonalizable and has only negative eigenvalues. 
\end{enumerate}
Suppose $x(t)$ is a flow on $M$ such that
\begin{enumerate}[label=(\roman*)]
    \item $x'(t)=V(x(t))$ on $[0,\infty)$, and 
    \item $\lim_{t\rightarrow\infty}x(t)=0$.
\end{enumerate}
Then there exists a sequence of finitely many increasing numbers $0=t_0<t_1<...<t_{n-1}<t_n=\infty$ such that for all $i\in\{1,...,n\}$, $x([t_{i-1},t_i))\subseteq D_j$ for some $j$.
\end{theorem}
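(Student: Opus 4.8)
The plan is to reduce the theorem to two assertions: (a) for every finite $T$, the interval $[0,T]$ is a union of finitely many subintervals on each of which $x$ takes values in a single $D_j$; and (b) there is a finite $T$ with $x([T,\infty))\subseteq D_j$ for some $j$. Together these give the conclusion. For (b) it suffices, for each of the finitely many hyperplanes $H=\{a\cdot x=b\}$ bounding a half space used to define some $D_i$, to show that $\operatorname{sign}(a\cdot x(t)-b)$ is constant for all large $t$: then on a tail all these signs are constant, the nonempty set $\{j:x(t)\in D_j\}$ is constant there, and any member of it works. The workhorse throughout is that on any interval where $x(t)$ stays in a single $D_j$, the curve $x$ coincides there with the flow of the analytic extension $\tilde V_j$ of $V|_{D_j\cap U}$ through the relevant initial value, hence is real-analytic; so $t\mapsto a\cdot x(t)-b$ is analytic on such an interval and has isolated zeros there unless it vanishes identically, in which case $x$ runs inside $H$ and one may induct on the ambient dimension, replacing $\mathbb{R}^n$ by $H$ with the induced convex polytope decomposition $\{D_i\cap H\}$ and the vector field $V|_H$ (which is tangent to $H$ along $x$).

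For (a), fix $T<\infty$ and suppose it fails; then infinitely many of the maximal intervals on which $x$ stays in a single $D_j$ accumulate at some $t^*\in(0,T]$. Some one of the finitely many bounding hyperplanes $H=\{a\cdot x=b\}$ carries infinitely many of the corresponding breakpoints, so $h(t):=a\cdot x(t)-b$ has zeros accumulating at $t^*$; by Rolle's theorem and the $C^2$-regularity of $x$ (hence of $h$) this forces $h(t^*)=h'(t^*)=h''(t^*)=0$, i.e.\ $p:=x(t^*)\in H$, $a\cdot V(p)=0$, and $a\cdot DV(p)V(p)=0$ (had $a\cdot V(p)$ been nonzero, $h$ would be strictly monotone near $t^*$ and could not accumulate zeros). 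Near $p$ each $D_i$ is the germ of a polyhedral cone, and on each of the accumulating pieces $x$ is an analytic $\tilde V_j$-trajectory lying on one side of $H$; I would combine unique continuation of these analytic pieces with the vanishing of the $2$-jet of $h$ at $t^*$ to conclude that $x$ must in fact run inside $H$ near $t^*$, and then close by the dimensional induction of the first paragraph. With compactness of $[0,T]$ this yields (a).

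For (b), hypotheses (ii)--(iv) enter. Eventually $x(t)$ lies in a ball $B$ around $0$ so small that each $D_i$ there equals its tangent cone $C_i$ at $0$. Write $x(t)=\xi(t)+\eta(t)$ with $\xi(t)$ the orthogonal projection of $x(t)$ onto $T_0M$. Since $M$ is tangent to $T_0M$ at $0$, $|\eta|=O(|\xi|^2)$; since $V$ is analytic on each $D_i\cap U$ with $V(0)=0$ and $DV(0)$ common to all pieces (by $C^1$-ness), $V(x)-DV(0)x=O(|x|^2)$ near $0$; and by (ii), $DV(0)$ preserves $T_0M$. These combine to give $\xi'(t)=DV(0)|_{T_0M}\,\xi(t)+O(|\xi(t)|^2)$, a perturbed linear node, and by (iv) a standard analysis of perturbed nodes shows that $x(t)/|x(t)|$ converges to a unit eigenvector $u\in T_0M$ of $DV(0)|_{T_0M}$. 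If $u$ is interior to some cone $C_j$, then $x(t)$ eventually lies in $C_j\cap B\subseteq D_j$, so (b) holds; done.

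The remaining case --- and the main obstacle --- is when $u$ lies on a proper face of the cone arrangement, i.e.\ $a\cdot u=0$ for one or more bounding hyperplanes $H=\{a\cdot x=0\}$ through $0$. Since $u$ is an eigenvector, $h(t)=a\cdot x(t)$ satisfies not only $h(t)=o(|x(t)|)$ but also $h'(t),h''(t)=o(|x(t)|)$, so the linearization decides nothing about $\operatorname{sign}(h)$, and the task is to rule out that $h$ changes sign infinitely often as $t\to\infty$. My approach would be: on each sojourn of $x$ in a fixed cone, $x$ is an analytic trajectory of an analytic field with a hyperbolic node at $0$, so along that sojourn $h$ admits an exponential-polynomial asymptotic expansion and is eventually of one sign; hence oscillation can survive only through infinitely many switches between cones. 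To preclude these I would blow up at $0$, so that the rescaled sojourns converge to trajectories of the linear node based at $u$, all of which lie in $H$, and then use this limit together with the geometry of the cones near the ray $\mathbb{R}_{>0}u$ --- and, if necessary, a further reduction to a linear slice through $u$ with an induction on $n$ --- to force the switching, hence the oscillation, to terminate. The heart of the matter is precisely this: showing that the trajectory cannot forever weave across a face of the cone arrangement while decaying to $0$, a regime in which the linear part carries no information and the sign of $h$ must be read off the merely piecewise-analytic higher-order data --- which is where conditions (ii) and (iv) must be used.
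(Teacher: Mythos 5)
Your reduction to (a) local finiteness on $[0,T]$ plus (b) tail containment matches the paper's decomposition into Theorems 1.2 and 1.3, but both of your sub-arguments have genuine gaps.

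For (a), the accumulation/Rolle argument only delivers $h(t^*)=h'(t^*)=h''(t^*)=0$, because $V$ is merely $C^1$ and so $x$ is merely $C^2$; you cannot differentiate further. But the order of tangency of the analytic extension's trajectory $y(t)$ to $\partial D_j$ at $p$ can be any finite $k$, and $2$-jet vanishing says nothing about whether $x$ exits $D_j$ at order $k\geq 3$. The paper closes this by \emph{not} appealing to higher derivatives of $x$: it inducts $x(t)=y(t)+o(t^j)$ for $j=0,1,\ldots,k$ directly from the integral equation $x(t)=\int_0^t V(x(s))\,ds$, using that $V$ agrees with the analytic $V_*$ on $D_j$ and that the nearest point $y_*(t)\in D_j$ to $y(t)$ satisfies $y_*(t)=y(t)+o(t^j)$. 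Your phrase ``combine unique continuation of these analytic pieces with the vanishing of the $2$-jet'' does not supply this missing step; the accumulating sojourns are in \emph{different} cells with \emph{different} analytic extensions, and unique continuation only applies within one such piece.

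For (b), you correctly isolate the hard case (the limit direction $u$ lying on a face), and you are honest that you have not resolved it. Two of the proposed tools do not work as stated. The blow-up at $0$ sends rescaled sojourns to trajectories of the linearized node \emph{in} $H$; but since both sides of $H$ collapse onto $H$ in the limit, the blow-up destroys exactly the sign information you need, so it cannot by itself preclude infinite crossing. The dimensional induction ``replacing $\mathbb{R}^n$ by $H$'' is also not available here: unlike in (a), the trajectory is not contained in $H$, and $V$ need not be tangent to $H$ (only $DV(0)$ respects $T_0M$, and $H$ need bear no relation to $T_0M$), so restricting the problem to $H$ is not a well-posed reduction. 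The paper's resolution is the whole $\lambda$-series apparatus of Sections 3--5: it produces a convergent exponential-polynomial expansion $x(t)=\sum_J P_J(t)e^{\lambda\cdot Jt}$ whose partial truncations dominate the higher-order information that the linearization misses, together with a maximality argument (the set $A$ in the proof of Theorem 1.3) that either identifies $x$ with an exact analytic $\lambda$-series solution $y(t;c)$ in $D$ or produces a nonzero leading term in $d_s(x(t),H_i)$ that fixes the eventual sign. That is precisely the machinery your sketch is missing.
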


To prove Theorem 1.1, by compactness we only need to prove Theorems 1.2 and 1.3, describing the local behavior and the limiting behavior respectively. 

\begin{theorem}
Let $U$ be an open domain in $\mathbb R^n$
and $U\subseteq\cup_i D_i$ be a finite closed cover such that each $D_i$ is a convex polytope. Suppose $V(x)$ is a $C^1$ vector field on $U$ and is analytic on each $D_i\cap U$. If $x'(t)=V(x(t))$ near $0$, then there exists $\epsilon>0$ such that $x([0,\epsilon))\subseteq D_i$ for some $i$.
\end{theorem}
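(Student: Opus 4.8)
The plan is to localize the problem near $x_0:=x(0)$, reduce it to a conical model, and then run an induction on the ambient dimension $n$ whose inductive step is a blow-up at $x_0$. First I would localize: choose $r>0$ with $\overline{B(x_0,r)}\subseteq U$ so small that $B(x_0,r)$ meets only those $D_i$ containing $x_0$, replace each such $D_i$ by its tangent cone at $x_0$ (a polyhedral cone with apex $x_0$, agreeing with $D_i$ on $B(x_0,r)$), and replace $U$ by $B(x_0,r)$. Now every $D_i$ is a cone with apex $x_0$, cut out by finitely many hyperplanes through $x_0$; let $\ell_1,\dots,\ell_m$ be the corresponding affine functionals, normalized to vanish at $x_0$, so that $D_i=\{x:\ell_s(x)\ge 0,\ s\in S_i\}$ for suitable $S_i$. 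Here $V$ is still $C^1$ on $U$ and analytic on each $D_i\cap U$, and $x'=V(x)$ persists on some $[0,\delta)$.

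Using that $\{D_i\}$ is a cover and that the $D_i$ are convex, one checks that every nonempty relatively open stratum of the hyperplane arrangement $\{\ell_s=0\}$ lies inside a single $D_i$. Hence it suffices to prove that the sign vector $\sigma(t):=(\operatorname{sign}\ell_s(x(t)))_{s=1}^m$ is constant on some interval $(0,\epsilon)$, since then $x((0,\epsilon))$ lies in one stratum, hence in one $D_i$, and by closedness $x([0,\epsilon))\subseteq D_i$. If $V(x_0)=0$, then $x_0$ is an equilibrium of the locally Lipschitz field $V$, so by uniqueness $x\equiv x_0$ and $\sigma$ is constant; assume from now on $w:=V(x_0)\neq 0$. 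For each $s$ with $D\ell_s(w)\neq 0$ the first-order expansion $\ell_s(x(t))=t\,D\ell_s(w)+o(t)$ fixes the sign of $\ell_s(x(t))$ for small $t>0$, so the coordinates of $\sigma$ still to be controlled correspond to $S':=\{s:D\ell_s(w)=0\}$, i.e.\ to the hyperplanes containing the direction $w$.

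Since $x(t)-x_0=tw+O(t^2)$, we have $x(t)\neq x_0$ for $0<t<\delta_0$, and we may form the blow-up $\theta(t):=(x(t)-x_0)/|x(t)-x_0|\in S^{n-1}$; a short computation using $V\in C^1$ shows that $\theta$ extends to a $C^1$ map on $[0,\delta_0)$ with $\theta(0)=w/|w|=:e$. Crucially, $\operatorname{sign}\ell_s(x(t))=\operatorname{sign} D\ell_s(\theta(t))$, only the functionals $D\ell_s$ with $s\in S'$ are relevant near $e$, and those vanish at $e$; passing to a central chart of $S^{n-1}$ about $e$ — under which great spheres become affine hyperplanes through the chart origin — turns $\theta$ into a $C^1$ curve in an open domain of $\mathbb R^{n-1}$, starting at the origin, analytic on each subinterval along which $x$ stays in a single $D_i$ (there $x$ is the integral curve of the analytic extension $V^{(i)}$ of $V|_{D_i\cap U}$, hence analytic, and so is $\theta$), and governed by a field analytic on each piece of a finite convex polytope cover having strictly fewer bounding hyperplanes — or, in the degenerate case $S'=\{1,\dots,m\}$, a cover in strictly smaller dimension. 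Thus the inductive hypothesis (base case $n=1$ being elementary via the sign of $V(x_0)$) applies to $\theta$ and shows its reduced sign vector is eventually constant; combined with the transverse hyperplanes handled above, $\sigma$ is eventually constant.

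The main obstacle is concealed in the previous paragraph. After the blow-up $\theta$ does not satisfy an autonomous equation — $\theta'$ still involves $|x(t)-x_0|$ — so the statement actually being proved by induction must be the slightly more general one allowing the vector field to depend analytically (on each piece) on an auxiliary parameter or time. More seriously, one must rule out that $\theta$ oscillates infinitely as $t\to 0^+$, which a merely $C^1$ curve could do (think of $t^{N}\sin(1/t)$). This is exactly where piecewise analyticity must be used decisively: on each subinterval along which $x$ stays in a single $D_i$ the trajectory is a genuine analytic arc, so $t\mapsto \ell_s(x(t))$ extends analytically past the endpoints and has only finitely many, finite-order zeros there; together with the Baire-category observation that such monochromatic subintervals are dense in every $(0,\epsilon)$, and a Puiseux/{\L}ojasiewicz-type analysis of the rescaled trajectory, this should force $\theta$ to possess a well-defined limiting direction at $e$ and preclude infinite chattering, letting the induction close. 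I expect this oscillation control — equivalently, the existence of an iterated tangent cone of the trajectory at $x_0$ — to be the technical heart of the proof.
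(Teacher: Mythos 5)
Your proposal takes a genuinely different route from the paper and, by your own admission, has a real gap at its technical heart: you do not actually carry out the ``oscillation control'' for the blown-up curve $\theta(t)$ as $t\to 0^+$, and without it the induction does not close. The Baire-category remark (monochromatic subintervals are dense) does not by itself prevent the sign vector from changing infinitely often, and the planned ``Puiseux/\L ojasiewicz-type analysis'' of the rescaled trajectory is exactly the nontrivial content of the theorem, left unproved. There are also secondary issues in the inductive setup: after blow-up the dynamics for $\theta$ are non-autonomous (you acknowledge this), and it is not checked that the resulting field on the chart of $S^{n-1}$ is $C^1$ and piecewise analytic with respect to a \emph{convex polytope} cover in dimension $n-1$, that $\theta$ is in fact $C^1$ up to $t=0$ (one needs $x\in C^2$ and a careful estimate of the tangential part of $x'$), or that the blown-up cover really has strictly fewer hyperplanes in all cases, so the induction's well-foundedness is not established.

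The paper avoids all of this with a short comparison argument that never decomposes the problem by dimension. Fix a single polytope $D=D_i$ and let $y(t)$ be the \emph{analytic} solution of the analytically extended field $V_*$ on $D\cap U$ with $y(0)=x(0)$. Either $y$ stays in $D$ on a small interval, in which case $x\equiv y\in D$ by Picard uniqueness; or, by analyticity of $y$ and the convex-geometric comparison of Lemma~1.4, the exit speed is polynomial: $a_1 t^k\le d(y(t),D)\le a_2 t^k$ for some integer $k\ge 1$. A bootstrap using the $C^1$-regularity of $V$ then gives $x(t)=y(t)+o(t^k)$ (one shows $x(t)=y(t)+o(t^j)$ for $j=0,1,\dots,k$ by comparing $V(x(t))$ with $V_*(y(t))$ through the nearest point $y_*(t)\in D$), so $x$ also leaves $D$ for small $t>0$. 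Running this over the finitely many $D_i$ gives the claim. The decisive difference is that the paper needs only the convexity of each single $D_i$ (via Lemma~1.4) and a one-step comparison with the analytic solution; no blow-up, no strata, no induction on $n$, and the oscillation issue never arises because the polynomial exit rate $t^k$ is produced directly by the analyticity of $y$ rather than recovered from $x$.
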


\begin{theorem}
Let $U$ be an open domain in $\mathbb R^n$
and $U\subseteq\cup_i D_i$ be a finite closed cover such that each $D_i$ is a convex polytope. 
Let $M\ni0$ be an $m$-dim analytic submanifold in $U$. 
Suppose $V(x)$ is a $C^1$ vector field on $U$ such that 
\begin{enumerate}[label=(\roman*)]
    \item $V(x)$ is analytic on each $D_i\cap U$,
    \item $DV(x)(T_xM)\subseteq T_xM$ for all $x\in M$,
    \item $V(0)=0$, and
    \item $DV(0)|_{T_0M}$ is diagonalizable and has $m$ negative eigenvalues $\lambda_1\geq...\geq\lambda_m$. 
\end{enumerate}
Suppose $x(t)$ is a flow on $M$ such that
\begin{enumerate}[label=(\roman*)]
    \item $x'(t)=V(x(t))$ on $[0,\infty)$, and 
    \item $\lim_{t\rightarrow\infty}x(t)=0$.
\end{enumerate}
Then there exists $T>0$ such that $x([T,\infty))\subseteq D_j$ for some $j$.
\end{theorem}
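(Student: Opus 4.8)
\textit{The plan} is to show that $x(t)$ crosses the facets of the decomposition only finitely often, and that after the last crossing it is trapped in one $D_j$.

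\textit{Reduction.} The facets of $D_1,\dots,D_k$ lie on finitely many hyperplanes; after shrinking $U$ we may assume that those meeting a neighbourhood of $0$ pass through $0$, say with unit normals $a_1,\dots,a_N$, and that near $0$ each $D_i\ni 0$ is the intersection of the halfspaces $\{a_\ell\cdot x\ge0\}$ over its active indices $\ell$ (the polytopes not containing $0$ stay bounded away from $0$ and may be discarded). Applying Theorem 1.2 to the flow translated to start at an arbitrary time $t_*\ge 0$, the set of times at which $x(t)$ switches from one $D_i$ to another has no accumulation point in $[0,\infty)$, and by compactness it is finite on every bounded interval; so the conclusion can fail only if this set accumulates at $\infty$, hence (pigeonhole) only if for some fixed $\ell$ the function $g_\ell(t):=a_\ell\cdot x(t)$ changes sign for arbitrarily large $t$. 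Conversely, a short convexity argument shows that if every $g_\ell$ is eventually of one (weak) sign, then for large $t$ the point $x(t)$ lies in the polyhedral cone cut out near $0$ by the corresponding halfspaces, and that cone lies inside a single $D_j$; thus it suffices to prove that each $a_\ell\cdot x(t)$ is eventually of one sign (the case $g_\ell\equiv0$ eventually being permitted and needing nothing further).

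\textit{Asymptotic direction.} Since $V$ is $C^1$ on $U$ and analytic on each $D_i$, on $M$ near $0$ one has $V(x)=DV(0)x+O(|x|^2)$ with a uniform constant (finitely many pieces, second order on each). Writing $y(t)$ for the orthogonal projection of $x(t)$ to $T_0M$ one gets $\dot y = Ay + O(|y|^2)$ with $A:=DV(0)|_{T_0M}$, and $a_\ell\cdot x(t)=\bar a_\ell\cdot y(t)+O(|y|^2)$ with $\bar a_\ell:=a_\ell|_{T_0M}$. Fixing an inner product on $T_0M$ adapted to the diagonalizable negative operator $A$, standard hyperbolic-sink estimates based on this $O(|y|^2)$ bound — this is the point where analyticity enters, as it forces the relevant variation-of-constants integrals to converge — show that $-\lim\frac1t\log|x(t)|$ exists and equals $|\mu|$ for a single eigenvalue $\mu$ of $A$ and that $x(t)/|x(t)|$ converges to a unit eigenvector $u$ of $A$ for $\mu$. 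If $\bar a_\ell\cdot u\ne 0$ then $a_\ell\cdot x(t)$ has the sign of $\bar a_\ell\cdot u$ for large $t$, so we may assume $\bar a_\ell\cdot u=0$ for the hyperplanes still to be treated.

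\textit{The non-oscillation estimate (the crux).} I would perform a directional blow-up of the flow at $0$ along $u$: after a linear change of coordinates on $T_0M$ making $u=e_1$ and $A$ diagonal, write $y=y_1(e_1+w)$ with $w$ in the complementary coordinates (note $y_1(t)>0$ eventually). Then $\dot y_1=\mu y_1+O(y_1^2)$ and $\dot w=(A'-\mu I)w+O(y_1)$ with $A'$ the complementary block of $A$, using convexity of the polytopes (cones near $0$) to see that the remainders are again piecewise analytic; when $\mu$ is simple this is once more a hyperbolic sink, with eigenvalues $\mu<0$ and $\lambda_i-\mu<0$, the blown-up trajectory $(y_1(t),w(t))$ tends to $0$, and $a_\ell\cdot x(t)=y_1(t)(\bar a_\ell'\cdot w(t))+O(y_1^2)$ has the sign of $\bar a_\ell'\cdot w(t)$ — a quantity of the same type, with the contact along $h_\ell$ recorded by $w$ being asymptotically tangent to the transformed hyperplane. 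Iterating, the iteration terminates because on each individual piece $x(t)$ solves an analytic ODE and so is tangent to the fixed analytic hypersurface $h_\ell$ only to finite order unless contained in it (in which case $a_\ell\cdot x(t)\equiv0$ eventually), while the finiteness of the pieces on bounded intervals (Theorem 1.2) lets one bound this contact order along the whole trajectory. A repeated eigenvalue $\mu$ creates a center direction along the eigenspace $E_\mu$; this is dealt with by first analysing the dynamics on $E_\mu$ (where $A=\mu I$, so $e^{-\mu t}y$ has a nonzero limit) and then blowing up the contracting directions.

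\textit{Main obstacle.} The essential difficulty is the non-oscillation estimate when $x(t)$ genuinely passes through several polytopes infinitely often as $t\to\infty$: the quadratic part of $V$ jumps across the walls, and which quadratic part is active depends on the sign of $a_\ell\cdot x(t)$ itself, so the blown-up vector field stays only piecewise analytic and one must show the blow-up iteration still terminates uniformly in the pieces, combining analyticity within each piece with a global bound on contact orders. Secondary technical points are the passage to $T_0M$ (or to a flat model of $M$ via an analytic chart, under which the polytopes become semianalytic but keep their polyhedral tangent cones at $0$) and the bookkeeping for repeated eigenvalues.
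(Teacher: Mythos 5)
Your proposal takes a genuinely different route from the paper, and you have been candid enough to flag its own weak point. The two approaches are worth contrasting. You try to prove non-oscillation of each wall distance $a_\ell\cdot x(t)$ by a directional blow-up: extract the limiting direction $u=\lim x(t)/|x(t)|$, blow up along $u$, and argue that the order of contact with each wall can only be bounded. The paper instead picks a single cell $D\ni0$, extends $V$ analytically from $D$ to $V_*$, builds the full $\lambda$-series family $y(t;c)$ of solutions of $\bar y'=\tilde V_*(\bar y)$ (Sections~3--4), and compares the actual flow $\bar x(t)$ with these formal solutions order by order. The comparison is organized around the set $A$ of integers $k$ with $\bar x(t)=\bar y(t;c)+o(e^{k\lambda_0 t})$; either $A$ is unbounded, in which case Lemma~5.3 gives $\bar x\equiv\bar y(t;c)$ and one reads off the sign of each $d(y(t;c),H_i)$ directly from an analytic $\lambda$-series, or $A$ has a maximum $k$ and one either finds a dominant term for some $d(x(t),H_i)$ or uses Theorem~4.5(c) together with Lemma~5.4 to improve $c$ and contradict maximality.

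The gap you identify in your own sketch is exactly the one that matters, and it is not cosmetic. At each stage of the blow-up you write the remainder as $O(|y|^2)$ or $O(y_1)$ ``using convexity of the polytopes'', but which quadratic remainder is active at time $t$ depends on which cell $x(t)$ currently occupies, and that is precisely the unknown. Your proposed termination criterion---finite order of tangency to the fixed hypersurface $h_\ell$, because on each piece the trajectory solves an analytic ODE---only controls the situation on a single piece. If the switching times between cells accumulate at $\infty$ (which Theorem~1.2 in no way rules out: it gives finiteness only on bounded intervals), then no single analytic extension of $V$ applies along any terminal ray of time, and a ``global bound on contact orders'' along the whole trajectory does not follow from the pieces. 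The paper's maximal-$k$ argument is the replacement for this missing step: it shows that arbitrarily high order of agreement with a formal solution on one cell either forces exact agreement or can be strictly improved, which is what eventually traps $x(t)$ in or out of $D$. Two secondary points you wave past also need care. First, that $x(t)/|x(t)|$ converges, and to an eigenvector of $A=DV(0)|_{T_0M}$, is itself delicate when $V$ is only piecewise analytic and the eigenvalue is not simple; the paper does not prove or need such convergence, only the crude $x(t)=O(e^{\lambda_1 t})$ of Lemma~5.2 before passing to $\lambda$-series. Second, your remark that analyticity ``forces the relevant variation-of-constants integrals to converge'' glides over a substantial piece of work: the convergence of the $\lambda$-series with polynomial coefficients is the content of Propositions~3.2--3.3 and Theorem~4.5(a), and requires the domination Lemma~4.2 and the majorant construction of Lemma~4.4.
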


\subsection{Finiteness of the edge flips in the  discrete Yamabe flow}
As a consequence of Theorem 1.1, we prove the finiteness of the number of edge flips in the discrete Yamabe flow introduced in
\cite{gu2018discrete}. One may refer \cite{gu2018discrete} for backgrounds.
Other related work on discrete conformality and geodesic triangulations could be found in 
\cite
{wu2022surface,
luo2023convergence,
wu2021fractional,
luo2023deformation,
dai2023rigidity,
wu2023computing,
izmestiev2023prescribed,
luo2021deformation2,
luo2019koebe,
wu2020convergence,
luo2021deformation,
luo2022discrete,
wu2015rigidity,
sun2015discrete,
gu2018discrete,
gu2018discrete2,
gu2019convergence
}.
Here we give a very brief explanation.

A discrete conformal class in \cite{gu2018discrete} is parametrized by $\mathbb R^n$ and has a natural finite cell decomposition $\mathbb R^n=\cup_i U_i$. A discrete Yamabe flow $u(t)$ is an integral curve of a $C^1$ vector field $F(u)$ on $\mathbb R^n$, satisfying that 

(a) $F(u)$ is analytic on each $U_i$,

(b) $F(u)\in\mathbf 1^\perp_n=\{(x_1,...,x_n)\in\mathbb R^n:x_1+...+x_n=0\}$ for all $u$, and

(c) $DF(u)$ is always symmetric and negative definite as a transformation on $\mathbf 1_n^\perp$.

(d) $F(\bar u)=0$ for some $\bar u\in u(0)+\mathbf 1_n^\perp$.
\\
A discrete Yamabe flow $u(t)$ always exists on $[0,\infty)$ and satisfies that 

(a) $u(t)-u(0)\in\mathbf 1_n^\perp$.

(b) $u'(t)=F(u(t))$, and

(c) $\lim_{t\rightarrow \infty}u(t)=\bar u$.
\\
Furthermore, the analytic change of coordinates $(u_1,...,u_n)\mapsto(e^{-2u_1},...,e^{-2u_n})$ will map each cell $U_i$ to $D_i\cap\mathbb R^n_{>0}$ for some convex polytope $D_i$. 

If we let $M$ be the image of $u(0)+\mathbf 1_n^\perp$ under this coordinate change, then Theorem 1.1 implies the finiteness of the number of switches in the new coordinates.

\subsection{Notations and a convex geometric estimate}
$ $

(a) Given $x\in\R^n$ (or $\Z^n$), denote $x=(x_1,...,x_n)$.

(b) Given $x\in\R^n$ (or $\Z^n$), denote $|x|=|x|_1=|x_1|+...+|x_n|$.

(c) Given $x\in\R^n$ (or $\Z^n$), denote $|x|_2=\sqrt{x_1^2+...+x_n^2}$.

(d) Given $x\in\R^n$ (or $\Z^n$), denote $|x|_\infty=\max_i|x_i|$.

(e) Given $x\in\R^n, I\in\Z^n$, denote $x^I=x_1^{I_1}... x_n^{I_n}$.

(f) $\mathbf 1_n$ denotes $(1,...,1)\in\mathbb R^n$.

(g) Given $U\subseteq\mathbb R^n$ and $p\in\mathbb R^n$, denote $d(p,U)=\inf_{q\in U}|p-q|_2$.

(h) Given a half space $H\subseteq\mathbb R^n$, $d_s(x, H)$ denotes the singed distance from $x$ to $H$. To be specific, $d_s(\cdot,H)$ is a linear function on $\mathbb R^n$ such that
$d_s(x,H)=d(x,H)$ whenever $d(x,H)>0$.

\begin{lemma}Suppose $D=\cap_{i=1}^mH_i$ is a convex polytope in $\mathbb R^n$, and $H_i$'s are half spaces. Then for all $p\in int(D)$ and $x\notin D$ we have that 
$$
\frac{\min_i d(p,\partial H_i)}{|x-p|_2}\cdot d(x,D)\leq \max_i d(x,H_i)\leq d(x,D).
$$
\end{lemma}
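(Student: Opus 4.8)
The right-hand inequality is immediate and I would dispatch it first: since $D=\cap_j H_j\subseteq H_i$ for every $i$, and the distance to a larger set can only be smaller, $d(x,D)\ge d(x,H_i)$ for each $i$, so $d(x,D)\ge\max_i d(x,H_i)$.

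For the left-hand inequality the plan is to travel along the straight segment from the interior point $p$ to the exterior point $x$ and to locate the first facet of $D$ that it crosses. First I would normalize each half space as $H_i=\{y:a^{(i)}\cdot y\ge b_i\}$ with $|a^{(i)}|_2=1$, so that $d_s(y,H_i)=b_i-a^{(i)}\cdot y$ is an affine function, $d(y,H_i)=\max\{0,d_s(y,H_i)\}$, and $d_s(y,H_i)=-d(y,\partial H_i)$ whenever $y$ lies in the interior of $H_i$. Set $r=\min_i d(p,\partial H_i)$; since $p\in\mathrm{int}(D)$ lies in the interior of every $H_i$, this gives $d_s(p,H_i)\le -r<0$ for all $i$. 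Next I would parametrize $\gamma(t)=p+t(x-p)$ and define $t_0=\inf\{t\in[0,1]:\gamma(t)\notin D\}$. Using that $D$ is closed, $p\in\mathrm{int}(D)$, and $x\notin D$, one gets $t_0\in(0,1)$ and $q:=\gamma(t_0)\in D\setminus\mathrm{int}(D)$; a short argument — if every $d_s(q,H_i)$ were strictly negative then an entire ball around $q$ would sit inside $D$ — shows $d_s(q,H_k)=0$ for some index $k$.

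Then I would run the estimate. By affineness of $d_s(\cdot,H_k)$ along $\gamma$,
\[
0=d_s(q,H_k)=(1-t_0)\,d_s(p,H_k)+t_0\,d_s(x,H_k),
\]
so $d_s(x,H_k)=-\tfrac{1-t_0}{t_0}\,d_s(p,H_k)\ge\tfrac{1-t_0}{t_0}\,r\ge(1-t_0)\,r$, using $0<t_0\le1$ in the last step; in particular $d(x,H_k)=d_s(x,H_k)$. On the other hand $q\in D$ and $x-q=(1-t_0)(x-p)$, so $d(x,D)\le|x-q|_2=(1-t_0)\,|x-p|_2$, that is, $1-t_0\ge d(x,D)/|x-p|_2$. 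Combining these,
\[
\max_i d(x,H_i)\ \ge\ d(x,H_k)\ \ge\ (1-t_0)\,r\ \ge\ \frac{\min_i d(p,\partial H_i)}{|x-p|_2}\,d(x,D),
\]
which is the desired bound. (All four quantities here are automatically positive: $d(x,D)>0$ and $|x-p|_2>0$ because $D$ is closed with $x\notin D\ni p$; $\min_i d(p,\partial H_i)>0$ because $p\in\mathrm{int}(D)$; and $\max_i d(x,H_i)>0$ because $x\notin D=\cap_i H_i$.)

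I do not anticipate a serious obstacle: this is essentially one-variable bookkeeping along a segment. The two points needing care are (i) making the exit point $q$ precise and verifying that $d_s(q,H_k)=0$ for some $k$, which is where the openness of $\mathrm{int}(D)$ enters, and (ii) remembering that $d(y,H_i)$ is the positive part of the affine function $d_s(y,H_i)$ — this is exactly what lets the single inequality $d_s(x,H_k)\ge(1-t_0)r$ be promoted to a lower bound on $\max_i d(x,H_i)$.
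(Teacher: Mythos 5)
Your proof is correct and follows essentially the same route as the paper's: identify the exit point $q$ of the segment from $p$ to $x$ as lying on $\partial H_k$ for some $k$, use the affine (similar-triangle) relation along the segment to bound $d(x,H_k)$ from below by $(1-t_0)\min_i d(p,\partial H_i)$, and note $|x-q|_2=(1-t_0)|x-p|_2\ge d(x,D)$. The only cosmetic difference is that you make the exit time $t_0$ and its properties explicit via an infimum argument, whereas the paper writes the same ratio directly as $\frac{d(x,H_j)}{d(p,\partial H_j)+d(x,H_j)}$.
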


\begin{proof}
The second part of the inequality is obvious.
Denote $q$ as the intersection of $\partial D$ and the straight arc from $p$ to $x$. Assume $q\in \partial H_j$ and then
$$
\frac{|x-q|_2}{|x-p|_2}=\frac{d(x,H_j)}{d(p,\partial H_j)+d(x,H_j)}
\leq
\frac{d(x,H_j)}{d(p,\partial H_j)}
\leq
\frac{\max_i d(x,H_i)}{\min_i d(p,\partial H_i)}.
$$
Notice that $|x-q|_2\geq d(x,D)$ and we are done.
\end{proof}

\section{Local finiteness}

\begin{proof}[Proof of Theorem 1.2]
Without loss of generality, assume $x(0)=0$. 
It suffices to show that for any $D_i$ there exists $\epsilon>0$ such that 
$x([0,\epsilon])\subseteq D_i$ or $x([0,\epsilon])\cap D_i=\{0\}$.

Fix $D=D_i$.
Let $V_*$ be an analytic vector field on an open superset of $D\cap U$ such that $V_*(x)=V(x)$ on $D\cap U$. Let $y(t)$ be the local analytic solution to 
$$
\left\{
\begin{array}{ll}
 y(0)=0\\
 y'(t)= V_*(y(t))
\end{array}
\right..
$$
If there exists $\epsilon>0$ such that $y([0,\epsilon])\subseteq D$, then by Picard's uniqueness theorem $x(t)= y(t)\in D$ on $[0,\epsilon]$.

So we may assume that for all $\epsilon>0$, $y([0,\epsilon])\not\subseteq D$. Suppose $D=\cap_{j=1}^m H_j$ where $H_j$'s are half spaces. 
For a fixed $j$, $d(y(t),H_j)=0$ for all small $t$ or $d(y(t),H_j)=at^k+o(t^k)$ for some $a>0$ and $k\in\mathbb Z_{\geq1}$. Since we cannot have that $d(y(t),H_j)=0$ for all small $t$ and for all $j$, 
we have that 
$$
\max_{j}d(y(t),H_j)=at^k+o(t^k)
$$ 
for some $a>0$ and $k\in\mathbb Z_{\geq1}$. By Lemma 1.4, there exists constants $a_1,a_2>0$ such that
$a_1 t^k\leq d(y(t),D)\leq a_2 t^k$ for small $t$.

We will prove $x(t)=y(t)+o(t^k)$, and then $x(t)\notin D$ for small $t>0$.
We will prove $x(t)=y(t)+o(t^j)$ inductively for all $j=0,1,...,k$. Obviously $x(t)=y(t)+o(1)$. Now we assume $x(t)=y(t)+o(t^j)$ and $j\leq k-1$. 
Denote $y_*(t)$ as the closest point to $y(t)$ in $D$, and then $ y_*(t)= y(t)+o(t^j)=x(t)+o(t^j)$ and
$$
V(x(t))
=V( y_*(t))+o(t^j)
=V_*( y_*(t))+o(t^j)
$$
$$
=V_*(y(t))+o(t^j)=y'(t)+o(t^j)
$$
and
$$
x(t)=\int_0^tV(x(s))ds=y(t)+o(t^{j+1}).
$$
\end{proof}
\section{$\lambda$-series and convergence}
\subsection{$n$-dim analytic functions}

An $n$-dim analytic function $f(x)$ near $0$ in $\mathbb R^m$ can be written as
$$
f(x)=\sum_{I\in\mathbb Z^m_{\geq0}}b_Ix^I.
$$
where $b_I\in\mathbb R^n$ and 
$$
|b_I|\leq M^{|I|+1}
$$ 
for some constant $M>0$.

Suppose $f(x)=\sum_Ib_Ix^I$ is an $n$-dim analytic function near $0$ in $\mathbb R^m$, and $g(x)=\sum_Jc_Jx^J$ is an $m$-dim analytic function near $0$ in $\mathbb R^k$ with $c_0=0$. Then $(f\circ g)(x)$ is an $n$-dim function near $0$ in $\mathbb R^k$, defined as
$$
(f\circ g)(x)=f(g(x)).
$$
Furthermore, $f\circ g$ is analytic near $0$ and
$$
(f\circ g)(x)=f(g(x))=\sum_Ib_I(\sum_Jc_Jx^J)^I
=\sum_Jd_Jx^J
$$
where
\begin{equation}
\label{bj}
d_J=
\sum_{I}b_I\sum_{J_{i,j}:\sum_{i=1}^m\sum_{j=1}^{I_i}J_{i,j}=J}\prod_{i,j}(c_{J_{i,j}})_i.
\end{equation}
Here we let $d_0=b_0$ as a convention and denote $(c_{J_{i,j}})_i$ as the $i$-th component of $c_{J_{i,j}}$.
Notice that the summation in equation (3.1) is well-defined since it contains only finitely many nonzero terms. This is because $c_0=0$ and if $|I|>|J|$ then $\sum_{i=1}^m\sum_{j=1}^{I_i}J_{i,j}=J$ forces some $J_{i,j}$ to be $0$.

\subsection{$\lambda$-series}
$P(t)=\sum_{i=0}^k a_it^i$ is called an \emph{$n$-dim polynomial} if $a_i\in\mathbb R^n$ for all $i$. 
Given $\lambda\in\mathbb R^m_{<0}$, we define an \emph{$n$-dim $\lambda$-series} as a formal expression
$$
x(t)=x(t;\lambda)=\sum_{J\in\mathbb Z^m_{\geq0}}P_J(t)e^{\lambda\cdot Jt}
$$
where each $P_J(t)$ is an $n$-dim polynomial.
To be rigorous, such an $n$-dim $\lambda$-series could be represented by a map $J\mapsto P_J$ from $\mathbb Z_{\geq0}^m$ to the space of $n$-dim polynomials. However, we will always represent a $\lambda$-series as the above infinite summation for better intuition.
The \emph{formal derivative} of a $\lambda$-series is also a $\lambda$-series naturally defined by
$$
(\sum_{J\in\mathbb Z^m_{\geq0}}P_J(t)e^{\lambda\cdot Jt})'
=\sum_{J\in\mathbb Z^m_{\geq0}}(P_J'(t)+\lambda\cdot JP_J(t))e^{\lambda\cdot Jt}.
$$
If $f(x)=\sum_{I\in\mathbb Z^n_{\geq0}}b_Ix^I$ is an analytic function in $n$ variables and
$$
x(t)=\sum_{J\in\mathbb Z^m_{\geq0}}P_J(t)e^{\lambda\cdot Jt}
$$
is an $n$-dim $\lambda$-series with $P_0(t)=0$, we can heuristically formally expand $f(x(t))$ as 
$$
f(x(t))=
\sum_{I\in\mathbb Z^n_{\geq0}}b_{I}\bigg(\sum_{J\in\mathbb Z^m_{\geq0}}P_J(t)e^{\lambda\cdot Jt}\bigg)^I
=\sum_{J\in\mathbb Z^m_{\geq0}}Q_J(t)e^{\lambda\cdot Jt}
$$
 where
\begin{equation}
\label{expansion}
Q_J(t)=\sum_{I\in\mathbb Z_{\geq0}^n}b_I
\sum_{J_{i,j}:
\sum_{i=1}^n\sum_{j=1}^{I_i}J_{i,j}=J
}
\prod_{i,j}(P_{J_{i,j}})_i(t).
\end{equation}
Here we let $Q_0=b_0$ as a convention and denote
$(P_{J_{i,j}})_i(t)$ as the $i$-th component of the $n$-dim polynomial $P_{J_{i,j}}(t)$.
So given such analytic function $f(x)$ and $\lambda$-series $x(t)$, we define 
$
(f\circ x)(t)$ as a formal $\lambda$-series 
$$
(f\circ x)(t)=\sum_JQ_J(t)e^{\lambda\cdot Jt}
$$ 
where $Q(t)$ is defined as in equation (3.2).
Notice that the summation in equation (3.2) is well-defined since it contains only finitely many nonzero terms. This is because $P_0(t)=0$ and if 
$|I|>|J|$ then $\sum_{i=1}^n\sum_{j=1}^{I_i}J_{i,j}=J$ forces some $J_{i,j}$ to be $0$.
Also notice that equation (3.2) is similar to equation (3.1). This is because a $\lambda$-series could be viewed as a power series with $m$ variables $e^{\lambda_1t},...,e^{\lambda_mt}$ and polynomial-valued coefficients.
\begin{prop}
\label{composition}
    Suppose $f(x)=\sum_{I}b_Ix^I$ is a $k$-dim analytic function in $n$ variables, $g(x)=\sum_{I}c_Ix^I$ is an $n$-dim analytic function in $m$ variables with $c_0=0$, and $x(t)=\sum_{I}P_I(t)e^{\lambda\cdot Jt}$ is an $m$-dim $\lambda$-series with $P_0=0$. Then
$$
(f\circ (g\circ x))(t)=((f\circ g)\circ x)(t).
$$
\end{prop}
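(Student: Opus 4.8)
The plan is to recognize that all four composites in the statement are instances of one and the same algebraic operation --- formal substitution of power series --- after which the identity becomes the associativity of that operation. To set this up, fix auxiliary variables $s=(s_1,\dots,s_m)$ and identify an $\ell$-dim $\lambda$-series $\sum_J P_J(t)e^{\lambda\cdot Jt}$ with the element $\sum_J P_J(t)s^J$ of $(\mathbb{R}[t][[s]])^\ell$, an $\ell$-tuple of formal power series in $s$ with coefficients in $\mathbb{R}[t]$. This is a bijection, since a $\lambda$-series is by definition nothing more than the assignment $J\mapsto P_J$ (the identification ignores the formal derivative, which plays no role here). Under it, formula \eqref{expansion} --- which defines the $\lambda$-series $f\circ w$ for an analytic $f(z)=\sum_I b_Iz^I$ and a $\lambda$-series $w$ with $P_0=0$ --- is, coefficient by coefficient, exactly the substitution of $\widetilde{w}:=\sum_J P_J(t)s^J$, which lies in the ideal $(s)$ because $P_0=0$, into $\widetilde{f}:=\sum_I b_Iz^I$, the latter viewed in $(\mathbb{R}[t][[z]])^k$ via $\mathbb{R}\hookrightarrow\mathbb{R}[t]$. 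Indeed \eqref{expansion} is the verbatim analogue of \eqref{bj}, and, as the chain of equalities at \eqref{bj} shows, \eqref{bj} records precisely the coefficients of $f(g(z))$ as a formal power series; hence $f\circ g$ is, at the level of coefficients, the substitution of $\widetilde{g}$ into $\widetilde{f}$, this time over $\mathbb{R}$.

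Granting this dictionary, I would track the four substitutions. Write $\widetilde{f}\in(\mathbb{R}[[z_1,\dots,z_n]])^k$, $\widetilde{g}\in(\mathbb{R}[[w_1,\dots,w_m]])^n$ with zero constant term (as $c_0=0$), and $\widetilde{x}\in(\mathbb{R}[t][[s_1,\dots,s_m]])^m$ with zero constant term (as $P_0=0$). The coefficient data of $g\circ x$ is the substitution of $\widetilde{x}$ into $\widetilde{g}$; this again has zero constant term --- equal to that of $\widetilde{g}$ --- so it may in turn be substituted into $\widetilde{f}$, and the coefficient data of $f\circ(g\circ x)$ is that substitution. On the other side, the coefficient data of $f\circ g$ is the substitution of $\widetilde{g}$ into $\widetilde{f}$, and the coefficient data of $(f\circ g)\circ x$ is the result of substituting $\widetilde{x}$ into it. (The hypotheses of the proposition are precisely what makes each of these four substitutions legitimate; $f$ itself is permitted a nonzero constant term, which is harmless.) The claimed identity is thus exactly that substituting $\widetilde{x}$ into $\widetilde{g}$ and then into $\widetilde{f}$ yields the same power series as first substituting $\widetilde{g}$ into $\widetilde{f}$ and then substituting $\widetilde{x}$.

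That last statement is the associativity of composition of formal power series, valid whenever the inner series have zero constant term; it is standard, the quickest argument being that both parenthesizations are continuous $\mathbb{R}$-algebra homomorphisms $\mathbb{R}[[z_1,\dots,z_n]]\to\mathbb{R}[t][[s]]$ sending each $z_i$ to the substitution of $\widetilde{x}$ into $\widetilde{g}_i$, and such a homomorphism is uniquely determined by those images by the universal property of the formal power series ring; the two parenthesizations therefore coincide, in particular on $\widetilde{f}$. \textbf{The main obstacle}, as I see it, is not this associativity but the dictionary step: one must verify with care that \eqref{expansion} genuinely is formal substitution over the coefficient ring $\mathbb{R}[t]$, and that the various zero-constant-term hypotheses line up consistently across all four compositions. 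A reader wishing to stay wholly elementary can instead establish the associativity by unwinding \eqref{bj} and \eqref{expansion} and rearranging the resulting finite nested sums over multi-indices --- routine but somewhat tedious.
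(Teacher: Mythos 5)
Your proof is correct, and it takes a genuinely different route from the paper's, which simply asserts that the identity ``can be shown by a routine but tedious computation, using equations (3.1) and (3.2)'' and gives no details. You instead observe that, under the identification $e^{\lambda_j t}\leftrightarrow s_j$ (which the paper itself flags as the right intuition), a $\lambda$-series is an element of $(\mathbb{R}[t][[s]])^\ell$, and that equations (3.1) and (3.2) are, coefficient by coefficient, the definition of formal power series substitution over the rings $\mathbb{R}$ and $\mathbb{R}[t]$ respectively. This reduces the proposition to associativity of substitution, which your universal-property argument handles cleanly: both parenthesizations are continuous $\mathbb{R}$-algebra maps $\mathbb{R}[[z_1,\dots,z_n]]\to\mathbb{R}[t][[s]]$ agreeing on the $z_i$, and $\mathbb{R}[t][[s]]$ is $(s)$-adically complete, so uniqueness applies. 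The bookkeeping you correctly identify as the crux --- that $c_0=0$ and $P_0=0$ keep every intermediate series in the relevant maximal ideal, while $b_0\neq 0$ is harmless --- is exactly what makes each substitution legal and the inner sums in (3.1)--(3.2) finite in each degree. Your approach buys conceptual transparency and a short proof at the cost of invoking a bit of commutative algebra; the paper's implied route is elementary but amounts to rearranging nested finite sums over multi-indices, which, as you note at the end, one can always fall back on.
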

\begin{proof}
This can be shown by a routine but tedious computation, using equations (3.1) and (3.2).
\end{proof}

\subsection{Convergence of a $\lambda$-series}

We have a simple sufficient condition for a $\lambda$-series to converge.
\begin{prop}
Given $\lambda\in\mathbb R^m_{<0}$ and a
$\lambda$-series $x(t)=\sum_J P_J(t)e^{\lambda\cdot Jt}$, suppose there exists $T>0$ and $q\in\mathbb R_{>0}$ such that $|P_J(t)|\leq t^{q|J|}$ for all $J\in\mathbb Z^m_{\geq0}$ and $t\geq T$. 
Then 

(a) the series $x(t)$ converges absolutely and uniformly for sufficiently large $t$, and

(b) for any $a\in\mathbb R$, as a real-valued function
$$
x(t)=\sum_{J\in\mathbb Z^m_{\geq0}:\lambda\cdot J\geq a}P_J(t)e^{\lambda\cdot Jt}+o(e^{at})
$$
as  $t\rightarrow\infty$.
\end{prop}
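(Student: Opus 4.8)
The plan is to deduce both parts from two elementary facts: that $t^qe^{-\delta t}\to0$ as $t\to\infty$ for every $\delta>0$, and the identity $\sum_{J\in\mathbb Z^m_{\geq0}}\rho^{|J|}=(1-\rho)^{-m}$ for $0\le\rho<1$ (which repackages the count $\#\{J\in\mathbb Z^m_{\geq0}:|J|=k\}=\binom{k+m-1}{m-1}$). First I would set $\mu:=-\max_i\lambda_i$; since every $\lambda_i<0$ this is positive, and since every $J_i\ge0$ one has the basic inequality $\lambda\cdot J=\sum_i\lambda_iJ_i\le-\mu|J|$ for all $J\in\mathbb Z^m_{\geq0}$. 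Combined with the hypothesis $|P_J(t)|\le t^{q|J|}$ this gives, for $t>0$,
$$
|P_J(t)e^{\lambda\cdot Jt}|\le t^{q|J|}e^{-\mu|J|t}=\bigl(t^qe^{-\mu t}\bigr)^{|J|}.
$$

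For part (a) I would fix $T_0\ge T$ with $t^qe^{-\mu t}\le\tfrac12$ for all $t\ge T_0$; then on $[T_0,\infty)$ every term is bounded by $(\tfrac12)^{|J|}$, and since $\sum_J(\tfrac12)^{|J|}=2^m<\infty$ the Weierstrass $M$-test yields absolute and uniform convergence there.

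For part (b), using the function $x(t)$ supplied by part (a), I would fix an auxiliary level $a^-<\min\{a,2a\}$ (so in particular $a^-<a$) and split $x(t)$ into three blocks: the main part over $\{J:\lambda\cdot J\ge a\}$, the intermediate part over $\{J:a^-\le\lambda\cdot J<a\}$, and the tail over $\{J:\lambda\cdot J<a^-\}$. Both the first two blocks lie in $\{J:\lambda\cdot J\ge a^-\}$, which is finite because $\lambda\cdot J\ge a^-$ forces $|J|\le\max\{0,-a^-/\mu\}$ by the basic inequality; so they are genuine finite sums. In the intermediate block every term satisfies $\lambda\cdot J<a$, hence $|P_J(t)e^{\lambda\cdot Jt}|/e^{at}\le t^{q|J|}e^{(\lambda\cdot J-a)t}\to0$, and a finite sum of such terms is $o(e^{at})$. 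For the tail I would use the half-strength splitting
$$
\lambda\cdot Jt=\tfrac t2(\lambda\cdot J)+\tfrac t2(\lambda\cdot J)<\tfrac{a^-t}{2}-\tfrac{\mu|J|t}{2},
$$
bounding the first copy of $\lambda\cdot J$ by $a^-$ and the second by $-\mu|J|$; this gives $|P_J(t)e^{\lambda\cdot Jt}|\le e^{a^-t/2}\bigl(t^qe^{-\mu t/2}\bigr)^{|J|}$, and summing these bounds over the tail and then over all of $\mathbb Z^m_{\geq0}$, with the geometric-series identity applied once $t^qe^{-\mu t/2}\le\tfrac12$, bounds the tail by $2^me^{a^-t/2}$, which is $o(e^{at})$ because $a^-/2<a$. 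Adding the three contributions proves (b).

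The step I expect to be the main obstacle is the correct choice of the intermediate level: a naive two-way split (main part $\{\lambda\cdot J\ge a\}$ versus everything below) fails when $a\le0$, because the crude tail estimate only delivers $O(e^{a^-t/2})$, and for this to be $o(e^{at})$ one needs $a^-<2a$ rather than just $a^-<a$ — while one must simultaneously keep $\{J:a^-\le\lambda\cdot J<a\}$ finite so that it can be handled term by term. Once the three-block decomposition is in place everything else is routine estimation with geometric series. (When $a>0$ the main part is empty and (b) follows at once from the boundedness of $x(t)$ established in part (a).)
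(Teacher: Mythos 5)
Your proof is correct, and part (b) takes a genuinely different route from the paper's. The paper's decomposition is indexed by $|J|_\infty$: it chooses $n$ so that $|J|_\infty > n$ forces $\lambda\cdot J < a$, treats the finitely many $J$ with $|J|_\infty \le n$ and $\lambda\cdot J < a$ term by term, and then bounds the $|J|_\infty > n$ block by comparing the product of geometric series $\prod_{i=1}^m(1-t^qe^{\lambda_i t})^{-1}$ with its degree-$n$ truncation, getting $\sum_i O((t^qe^{\lambda_it})^{n+1}) = o(e^{at})$. Your decomposition is indexed instead by the value of $\lambda\cdot J$, with an auxiliary level $a^-$ and a half-strength splitting that peels off a uniform factor $e^{a^-t/2}$ from every far-tail term; the bookkeeping then collapses to a single geometric series in $|J|$ via the coarse bound $\lambda\cdot J\le -\mu|J|$ with $\mu=-\max_i\lambda_i$. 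The paper's approach keeps the per-coordinate structure (each $\lambda_i$ enters its own geometric factor), which makes (a) and (b) share one estimate; yours discards that structure, which is slightly more elementary but requires you to identify the constraint $a^- < 2a$ by hand in the $a\le 0$ case --- a subtlety the paper's version absorbs into the choice of $n$. Your part (a) is substantively the same as the paper's (both rest on $t^qe^{\lambda_it}\to 0$), just packaged as a Weierstrass $M$-test rather than a uniform Cauchy-tail estimate.
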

\begin{proof}
(a) 
Without loss of generality, we may assume that $T$ is so large that $t^qe^{\lambda_it}$ is decreasing and less than $1$ on $[T,\infty)$ for all $i=1,...,m$.

For $t\in[T,\infty)$
$$
\sum_{|J|_\infty>n}|P_J(t)e^{\lambda\cdot Jt}|
\leq
\sum_{|J|_\infty>n}t^{q|J|}e^{\lambda\cdot Jt}
\leq
\sum_{|J|_\infty>n}T^{q|J|}e^{\lambda\cdot JT}
$$
$$
=\sum_{J}T^{q|J|}e^{\lambda\cdot JT}
-\sum_{|J|_\infty\leq n}T^{q|J|}e^{\lambda\cdot JT}
$$
$$
=\prod_{i=1}^m\left(\sum_{j=0}^\infty (T^{q}e^{\lambda_iT})^j\right)
-\prod_{i=1}^m\left(\sum_{j=0}^n 
(T^{q}e^{\lambda_iT})^j\right)
$$
which is independent on $t$ and goes to $0$ as $n$ goes to infinity, since for all $i$
$$
\sum_{j=0}^n 
(T^{k}e^{\lambda_iT})^j
\rightarrow
\sum_{j=0}^\infty 
(T^{q}e^{\lambda_iT})^j
=\frac{1}{1-T^qe^{\lambda_i T}}<\infty
$$
as $n\rightarrow\infty$.

(b)
Pick a large $n$ such that $\lambda\cdot J<a$ for all $J$ with $|J|_\infty> n$. Then

$$
\sum_{\lambda\cdot J<a}P_J(t)e^{\lambda\cdot Jt}
=
\sum_{|J|_\infty> n}P_J(t)e^{\lambda\cdot Jt}
+o(e^{at}).
$$
Then we finish the proof by showing that for sufficiently large $t$
$$
\sum_{|J|_\infty> n}P_J(t)e^{\lambda\cdot Jt}
\leq\sum_{|J|_\infty>n}t^{q|J|}e^{\lambda\cdot Jt}
$$
$$
=\sum_{J}t^{q|J|}e^{\lambda\cdot Jt}
-\sum_{|J|_\infty\leq n}t^{q|J|}e^{\lambda\cdot Jt}
$$
$$
=\prod_{i=1}^m\left(\sum_{j=0}^\infty (t^{q}e^{\lambda_it})^j\right)
-\prod_{i=1}^m\left(\sum_{j=0}^n 
(t^{q}e^{\lambda_it})^j\right)
$$
$$
=\sum_{i=1}^mO((t^qe^{\lambda_it})^{n+1})
=o(e^{at}).
$$
\end{proof}

\begin{prop}
\label{analytic_lambda}
Given $\lambda\in\mathbb R^m_{<0}$ and an $n$-dim
$\lambda$-series $x(t)=\sum_J P_J(t)e^{\lambda\cdot Jt}$ with $P_0(t)=0$, suppose there exists 
$T>1$ and $q\in\mathbb R_{>0}$ such that $|P_J(t)|\leq t^{q|J|}$ for all $J\in\mathbb Z^m_{\geq0}$ and $t\geq T$. 
    If $f(x)=\sum_Ib_Ix^I$ is a $1$-dim analytic function on a neighborhood of $0$ in $\mathbb R^n$, then there exists $r>0$ such that the $\lambda$-series
    $$
    (f\circ x)(t)=\sum_{J}Q_J(t)e^{\lambda\cdot Jt}.
    $$
    satisfies that $|Q_J(t)|\leq t^{r|J|}$ for all $J$ and $t\geq T$.
Furthermore, for sufficiently large $t$,
\begin{equation}
({f\circ x})(t)=f(x(t)).    
\end{equation}
Here $(f\circ x)(t)$ and $x(t)$ are viewed as two functions induced by taking the limit of the corresponding $\lambda$-series.
Equation (3.3) means that the limit of the $\lambda$-series $(f\circ x)(t)$ is equal to the analytic function $f$ evaluated at the limit of the $\lambda$-series $x(t)$.

\end{prop}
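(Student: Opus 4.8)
The plan is to first establish the coefficient estimate on $(f\circ x)$, which by Proposition 3.2 makes it a convergent $\lambda$-series, and then to verify (3.3) by showing that the formal expansion (3.2) is a genuine, absolutely convergent rearrangement of $f(x(t))=\sum_Ib_Ix(t)^I$ for $t$ large.

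\emph{Step 1: the bound on $Q_J$.} Fix $J$ and look at (3.2). Since $P_0(t)=0$, any term with some $J_{i,j}=0$ vanishes, so we may restrict to all $J_{i,j}\neq0$; then $\sum_{i,j}|J_{i,j}|=|J|$ with each summand $\geq1$ forces $|I|=\sum_iI_i\leq|J|$, so the sum over $I$ in (3.2) is finite. For $t\geq T$ we bound $\prod_{i,j}|(P_{J_{i,j}})_i(t)|\leq\prod_{i,j}t^{q|J_{i,j}|}=t^{q|J|}$, we use $|b_I|\leq M^{|I|+1}$, and we use the crude count that for fixed $I$ the number of $(J_{i,j})$ with $\sum_{i,j}J_{i,j}=J$ is $\prod_{\ell=1}^m\binom{J_\ell+|I|-1}{|I|-1}\leq 4^{m|J|}$. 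As there are at most $(|J|+1)^n$ multi-indices $I$ with $|I|\leq|J|$, we get $|Q_J(t)|\leq(|J|+1)^nM^{|J|+1}4^{m|J|}t^{q|J|}\leq C^{|J|}t^{q|J|}$ for $|J|\geq1$, with $C=C(n,m,M)$ absorbing the polynomial factor and the extra power of $M$ into an exponential. Since $t\geq T>1$, choosing $r\geq q$ with $T^{r-q}\geq C$ gives $|Q_J(t)|\leq t^{r|J|}$ for $|J|\geq1$; the term $J=0$ contributes the constant $Q_0=b_0=f(0)$, which is covered after at worst replacing the right-hand side by $\max(1,|f(0)|)\,t^{r|J|}$, an extra constant that is immaterial below. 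By Proposition 3.2, $(f\circ x)(t)$ then converges for all large $t$.

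\emph{Step 2: identifying the limit.} Because $P_0(t)=0$, $|x(t)|\leq\sum_{J\neq0}t^{q|J|}e^{\lambda\cdot Jt}=:S_0(t)$, and the computation in the proof of Proposition 3.2 gives $\sum_Jt^{q|J|}e^{\lambda\cdot Jt}=\prod_{i=1}^m(1-t^qe^{\lambda_it})^{-1}\to1$, so $S_0(t)\to0$ as $t\to\infty$. Hence for $t$ large the limit value $x(t)$ lies in the polydisk on which $f(y)=\sum_Ib_Iy^I$ converges absolutely, and $f(x(t))=\sum_Ib_Ix(t)^I$. For such $t$ every coordinate series $x_i(t)=\sum_J(P_J)_i(t)e^{\lambda\cdot Jt}$ converges absolutely, so in each $x(t)^I=\prod_{i,j}x_i(t)$ — a product of $|I|$ absolutely convergent series — we may multiply out; then summing over $I$ we rearrange the resulting sum over all pairs $(I,(J_{i,j}))$ and regroup it according to the common exponent $J=\sum_{i,j}J_{i,j}$, which yields exactly $\sum_JQ_J(t)e^{\lambda\cdot Jt}=(f\circ x)(t)$. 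All these rearrangements are legitimate because the fully unfolded sum is absolutely convergent for large $t$: in absolute value it is at most $\sum_I|b_I|\,S_0(t)^{|I|}\leq M\sum_{k\geq0}\binom{k+n-1}{n-1}(MS_0(t))^k<\infty$ as soon as $S_0(t)<1/M$. This proves (3.3).

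\emph{Where the work is.} The substantive part is the absolute-convergence bookkeeping in Step 2 that lets one pass freely between $\sum_Ib_Ix(t)^I$ and the grouped $\sum_JQ_J(t)e^{\lambda\cdot Jt}$; this is precisely where the two hypotheses $|b_I|\leq M^{|I|+1}$ and $|P_J(t)|\leq t^{q|J|}$ (through $S_0(t)\to0$) must be used in tandem. The rest is the same formal algebra already recorded in (3.2).
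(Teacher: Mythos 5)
Your proof is correct and follows the same two-step outline as the paper's (bound the coefficients $Q_J$, then justify the term-by-term rearrangement via absolute convergence); the interesting divergence is in how you bound $Q_J$. The paper observes that
$c_J:=\sum_I|b_I|\sum_{J_{i,j}\neq0:\sum J_{i,j}=J}1$
is precisely the Taylor coefficient of the composite analytic function $f_*\bigl(\tfrac{x}{1-x}\mathbf 1_n\bigr)$ with $f_*(y)=\sum_I|b_I|y^I$, so exponential growth $c_J\le R^{|J|}$ comes for free from analyticity. You instead do the bookkeeping by hand: $|I|\le|J|$ because $P_0=0$, then count tuples $(J_{i,j})$ by $\prod_\ell\binom{J_\ell+|I|-1}{|I|-1}\le 4^{m|J|}$, multiply by the number $\le(|J|+1)^n$ of admissible $I$, and absorb everything into $C^{|J|}$. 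Both routes work; the paper's is slicker and avoids the combinatorial estimates, yours is more elementary and self-contained. For Step 2, your appeal to absolute rearrangement of the fully unfolded double sum is exactly the content of the paper's $A_{N,M}$/$B_N$ double-limit argument, just phrased in the language of unconditional convergence rather than partial sums.

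Two small points worth tightening. First, in Step~1 you pass from $|b_I|\le M^{|I|+1}$ for $|I|\le|J|$ to a bound $M^{|J|+1}$, which requires $M\ge1$; since $M$ can always be enlarged, just say so (replace $M$ by $\max(M,1)$). Second, your explicit treatment of $J=0$ is actually a virtue: the bound $|Q_0|\le t^0=1$ fails as literally stated whenever $|f(0)|>1$, and the paper sidesteps this only by its ``WLOG $f(0)=0$'' reduction, which strictly speaking changes $Q_0$; your remark that one may enlarge the right-hand side by the constant $\max(1,|f(0)|)$ makes the statement honest without affecting any downstream use.
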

\begin{proof}
    Without loss of generality, assume $f(0)=0$. Suppose $t\geq T$ and by equation (3.2),
$$
|Q_J(t)|
\leq \tilde Q_J(t):=\sum_{I\in\mathbb Z_{\geq0}^n}|b_I|
\sum_{J_{i,j}:
\sum_{i=1}^n\sum_{j=1}^{I_i}J_{i,j}=J
}
\prod_{i,j}|(P_{J_{i,j}})_i(t)|
$$
$$
=\sum_{I\in\mathbb Z_{\geq0}^n}|b_I|
\sum_{J_{i,j}\neq0:
\sum_{i=1}^n\sum_{j=1}^{I_i}J_{i,j}=J
}
\prod_{i,j}|(P_{J_{i,j}})_i(t)|
$$
$$
\leq\sum_{I\in\mathbb Z_{\geq0}^n}|b_I|
\sum_{J_{i,j}\neq0:
\sum_{i=1}^n\sum_{j=1}^{I_i}J_{i,j}=J
}
\prod_{i,j}t^{q|J_{i,j}|}
$$
$$
=t^{q|J|}\sum_{I\in\mathbb Z_{\geq0}^n}|b_I|
\sum_{J_{i,j}\neq0:
\sum_{i=1}^n\sum_{j=1}^{I_i}J_{i,j}=J
}
1
$$
where
$$
c_J:=\sum_{I\in\mathbb Z_{\geq0}^n}|b_I|
\sum_{J_{i,j}\neq0:
\sum_{i=1}^n\sum_{j=1}^{I_i}J_{i,j}=J
}
1
$$
is the coefficient of $x^J$ in the local analytic function
$$
\sum_I |b_I|(\mathbf 1_n\sum_{i=1}^\infty x^i)^I=f_*(\frac{x}{1-x}\mathbf 1_n)
$$
where $f_*(x)=\sum_{I}|b_I|x^I$ is analytic near $0$.

So $c_J\leq R^{|J|}$ for some $R>0$, and for all $J\in\mathbb Z^m_{\geq0}$ and $t\geq T$
$$
|Q_J(t)|\leq \tilde Q_J(t)
\leq R^{|J|}t^{q|J|}\leq t^{(q+\log_T R)|J|}
.
$$

Suppose $t$ is sufficiently large so that 
$x(t)$ and $(f\circ x)(t)$ and $\sum_J\tilde Q_Je^{\lambda\cdot Jt}$ all converge absolutely and $f$ is analytic near $x(t)$.
To prove $({f\circ x})(t)=f( x(t))$ we need to show that
$$
\sum_JQ_J(t)e^{\lambda\cdot Jt}=\sum_{I}b_I(\sum_JP_J(t)e^{\lambda\cdot Jt})^I.
$$
Let 
$$
A_{N,M}=\sum_{|I|\leq N}b_I(\sum_{J:|J|\leq M}P_{J}(t)e^{\lambda\cdot Jt})^I
$$
and 
$$
B_N=\sum_{J:|J|\leq N}Q_J(t)e^{\lambda\cdot Jt}.
$$
If $N\leq M$, then
$$
|A_{N,M}-B_N|
=\left|\sum_{J:|J|>N}e^{\lambda\cdot Jt}\sum_{I:|I|\leq N}b_I
\sum_{J_{i,j}\neq0:
\sum_{i=1}^n\sum_{j=1}^{I_i}J_{i,j}=J,|J_{i,j}|\leq M
}
\prod_{i,j}(P_{J_{i,j}})_i(t)\right|
$$
$$
\leq\sum_{J:|J|>N}\tilde Q_J(t)e^{\lambda\cdot Jt}
$$
which is independent on $M$ and goes to $0$ as $N$ goes to infinity. Let $M\rightarrow\infty$ and then $N\rightarrow\infty$, and then we get 
$$
|\sum_{I}b_I(\sum_JP_J(t)e^{\lambda\cdot Jt})^I
-\sum_JQ_J(t)e^{\lambda\cdot Jt}|=0.
$$
\end{proof}

\section{Limiting behavior for analytic vector fields}
\subsection{Formal $\lambda$-series solutions to ODEs}
    Let $V(x)=\sum_I{b_I}x^I$ be an analytic vector field on a neighborhood of $0$ in $\mathbb R^m$ such that $V(0)=0$ and 
    $$
    \Lambda:=DV(0)=\text{diag}(\lambda_1,...,\lambda_m)
    $$
    for some $\lambda=(\lambda_1,...,\lambda_m)\in\mathbb R^m_{<0}$.
A $\lambda$-series 
$$
x(t)=\sum_JP_J(t)e^{\lambda\cdot Jt}
$$ 
with $P_0(t)=0$ formally satisfies $x'=V(x)$ if and only if for all $J\neq0$,
$$
P_J'+\lambda\cdot J P_J
=\sum_{I}b_I\sum_{J_{i,j}:\sum_{i=1}^n\sum_{j=1}^{I_i}J_{i,j}=J}\prod_{i,j}(P_{J_{i,j}})_i,
$$
which is equivalent to that
\begin{equation}
\left(\frac{d}{dt}-(\Lambda-\lambda\cdot J)\right)
P_J=Q_J
\end{equation}
where
\begin{equation}
Q_J=\sum_{I:|I|\geq2}b_I
\sum_{J_{i,j}:
\sum_{i=1}^m\sum_{j=1}^{I_i}J_{i,j}=J
}
\prod_{i,j}(P_{J_{i,j}})_i.
\end{equation}

\subsection{Construction of $\lambda$-series solutions}

Given $u\in\mathbb R$ and a 1-dim polynomial $Q(t)$, define
$$
(\frac{d}{dt}-u)^{-1}Q(t)=
-u^{-1}(1+u^{-1}\frac{d}{dt}
+u^{-2}\frac{d^2}{dt^2}+...)Q(t),
$$
if $u\neq0$ and  
$$
(\frac{d}{dt}-u)^{-1}Q(t)=\int_0^tQ(s)ds
$$
if $u=0$.
It is straightforward to verify that 
$$
(\frac{d}{dt}-u)
(\frac{d}{dt}-u)^{-1}Q(t)=Q(t).
$$
Given an $m$-dim diagonal matrix $U=\text{diag}(u_1,...,u_m)$ and an $m$-dim polynomial $Q(t)$, 
$$
P(t)=(\frac{d}{dt}-U)^{-1}Q(t)
$$ 
is defined to be such that
$$
P_i(t)=(\frac{d}{dt}-u_i)^{-1}Q_i(t)
$$
for all $i=1,...,m$. Clearly we have that 
$$
(\frac{d}{dt}-U)(\frac{d}{dt}-U)^{-1}Q(t)=Q(t).
$$

   Denote 
   $$
   \vec e_1=(1,0,...,0)\in\mathbb R^m,
   $$
   $$
   \vdots
   $$
   $$
   \vec e_m=(0,...,0,1)\in\mathbb R^m.
   $$
Given any $c\in\mathbb R^m$, we construct a formal $\lambda$-series solution
$$
x(t;c)=\sum_JP_J(t;c)e^{\lambda\cdot Jt}
$$
to $x'=V(x)$ as follows.

\begin{equation}
    P_0(t;c)=0,
\end{equation}
\begin{equation}
    P_{\vec e_i}(t;c)=c_i\vec e_i,
\end{equation}
\begin{equation}
P_J(t;c)=\left(\frac{d}{dt}-(\Lambda-\lambda\cdot J)\right)^{-1}Q_J(t;c),
\end{equation}
if $|J|\geq2$
where
\begin{equation}
Q_J(t;c)=\sum_{I\in\mathbb Z_{\geq0}^m:|I|\geq2}b_I
\sum_{J_{k,l}:
\sum_{i=1}^m\sum_{j=1}^{I_i}J_{i,j}=J
}
\prod_{i,j}(P_{J_{i,j}})_i(t;c).
\end{equation}
It is easy to verify such $x(t;c)$'s are formal $\lambda$-series solutions to $x'=V(x)$ parameterized by $c\in\mathbb R^m$.

\subsection{Dominating functions}
Given a $1$-dim polynomial in one variable
$$
P(t)=\sum_{i=0}^qa_it^i,
$$
we denote 
$$
P^*(t)
=\left(\sum_{i=0}^q a_it^i\right)^*
=\sum_{i=0}^q |a_i t^i|
$$
as a $1$-dim function in one variable.
Given an $n$-dim polynomial in one variable 
$P(t)=(P_1(t),...,P_n(t))$, then
denote 
$$
\deg(P)=\max_i\deg(P_i)
$$
and
$$
P^*(t)=\max_{i}P_i^*(t).
$$
It is routine to verify the following property.
\begin{prop}
Suppose $t\in\mathbb R$ and $a\in\mathbb R$ and $P,\tilde P$ are two $n$-dim polynomials in one variable. Then

    (a) $(P+\tilde P)^*(t)\leq P^*(t)+\tilde P^*(t)$,

    (b) $(aP(t))^*=|a|P^*(t)$, and

    (c) $(P(t)\tilde P(t))^*\leq P^*(t)\tilde P^*(t)$ if $n=1$.
\end{prop}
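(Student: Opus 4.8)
The plan is to unwind the definitions and reduce the two $n$-dimensional statements (a) and (b) to the scalar case via the identity $(\cdot)^*=\max_i(\cdot)_i^*$, and then verify the scalar statements using the triangle inequality, the homogeneity of $|\cdot|$, and (for (c)) the Cauchy product formula. Throughout, write $P=(P_1,\dots,P_n)$ and $\tilde P=(\tilde P_1,\dots,\tilde P_n)$, and expand $P_i(t)=\sum_j a_{i,j}t^j$ and $\tilde P_i(t)=\sum_j\tilde a_{i,j}t^j$ over a common finite index range, padding with zero coefficients if necessary, so that $P_i^*(t)=\sum_j|a_{i,j}|\,|t|^j$.

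For (a): in the scalar case, by the triangle inequality applied coefficientwise, $(P_i+\tilde P_i)^*(t)=\sum_j|a_{i,j}+\tilde a_{i,j}|\,|t|^j\le\sum_j|a_{i,j}|\,|t|^j+\sum_j|\tilde a_{i,j}|\,|t|^j=P_i^*(t)+\tilde P_i^*(t)$. Taking the maximum over $i$ and using $\max_i(x_i+y_i)\le\max_i x_i+\max_i y_i$ gives $(P+\tilde P)^*(t)\le P^*(t)+\tilde P^*(t)$. For (b): since $(aP_i)(t)=\sum_j(a\,a_{i,j})t^j$, we get $(aP_i)^*(t)=\sum_j|a|\,|a_{i,j}|\,|t|^j=|a|\,P_i^*(t)$; because $|a|\ge0$ it pulls out of the maximum, $\max_i(|a|\,P_i^*(t))=|a|\max_i P_i^*(t)$, which yields the equality $(aP)^*(t)=|a|\,P^*(t)$.

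For (c), with $n=1$ write $P(t)=\sum_i a_it^i$ and $\tilde P(t)=\sum_j\tilde a_jt^j$, so that $P(t)\tilde P(t)=\sum_k\big(\sum_{i+j=k}a_i\tilde a_j\big)t^k$, the inner sum being finite. Then
$$
(P\tilde P)^*(t)=\sum_k\Big|\sum_{i+j=k}a_i\tilde a_j\Big|\,|t|^k\le\sum_k\sum_{i+j=k}\big(|a_i|\,|t|^i\big)\big(|\tilde a_j|\,|t|^j\big)=\Big(\sum_i|a_i|\,|t|^i\Big)\Big(\sum_j|\tilde a_j|\,|t|^j\Big)=P^*(t)\tilde P^*(t),
$$
using the triangle inequality for the single inequality and regrouping the finite double sum as a product. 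No step here is a genuine obstacle: the proposition is pure bookkeeping, and the only points requiring a moment's care are the two elementary facts about $\max_i$ used in (a) and (b) and keeping the Cauchy-product indices straight in (c) — which is exactly why the statement can be dismissed as "routine to verify."
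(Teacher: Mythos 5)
Your proof is correct and is exactly the routine coefficientwise verification the paper alludes to when it dismisses the statement as ``routine to verify'' without supplying an argument: the scalar case follows from the triangle inequality and the Cauchy product, and the $n$-dimensional claims (a), (b) reduce to the scalar case via $\max_i(x_i+y_i)\le\max_i x_i+\max_i y_i$ and the nonnegativity of $|a|$ respectively. There is nothing to compare against since the paper omits the proof, and your fill-in is the natural one.
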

\begin{lemma}
Suppose $u\geq2$ and $Q(t)$ are $1$-dim polynomial and 
$$
P(t)=(\frac{d}{dt}-u)^{-1}Q(t).
$$    
Then
$$
P^*(t)\leq Q^*(t)
\quad\text{
 for all 
}
\quad
t\geq \frac{2\deg(Q)}{u}.
$$ 
\end{lemma}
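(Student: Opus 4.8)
The plan is to exploit that $Q$ is a polynomial, so that the formal operator $(\frac{d}{dt}-u)^{-1}$ collapses to a \emph{finite} sum of derivatives, and then estimate everything coefficient by coefficient. Put $q=\deg(Q)$. Since $Q^{(k)}\equiv 0$ for $k>q$, the definition of $(\frac{d}{dt}-u)^{-1}$ gives
$$
P(t)=-\sum_{k=0}^{q}u^{-k-1}\,Q^{(k)}(t).
$$
By the sub-additivity and scaling properties of $(\cdot)^*$ (Proposition 4.1, parts (a) and (b)), it therefore suffices to prove
$$
\sum_{k=0}^{q}u^{-k-1}\bigl(Q^{(k)}\bigr)^*(t)\le Q^*(t)
\qquad\text{whenever } t\ge 2q/u .
$$

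The first real step is to control $\bigl(Q^{(k)}\bigr)^*$ by $Q^*$. Writing $Q(t)=\sum_{i=0}^{q}a_it^i$, one has $Q^*(t)=\sum_{i=0}^{q}|a_i|t^i$ for $t\ge 0$ and $Q^{(k)}(t)=\sum_{i=k}^{q}\frac{i!}{(i-k)!}a_it^{i-k}$, so
$$
\bigl(Q^{(k)}\bigr)^*(t)=\sum_{i=k}^{q}\frac{i!}{(i-k)!}|a_i|t^{i-k}
\le q^{k}\,t^{-k}\sum_{i=k}^{q}|a_i|t^{i}\le\Bigl(\frac{q}{t}\Bigr)^{k}Q^*(t),
$$
where I use the elementary bound $\frac{i!}{(i-k)!}=i(i-1)\cdots(i-k+1)\le q^{k}$ for $i\le q$. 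This step needs $t>0$, which holds once $q\ge 1$ and $t\ge 2q/u$; the case $q=0$ is disposed of separately.

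The second step is merely to sum the geometric series. Combining the two displays,
$$
\sum_{k=0}^{q}u^{-k-1}\bigl(Q^{(k)}\bigr)^*(t)
\le\frac{Q^*(t)}{u}\sum_{k=0}^{q}\Bigl(\frac{q}{ut}\Bigr)^{k}.
$$
When $t\ge 2q/u$ we have $q/(ut)\le\tfrac12$, so $\sum_{k=0}^{q}(q/(ut))^{k}\le\sum_{k=0}^{\infty}2^{-k}=2$, whence $P^*(t)\le\frac{2}{u}Q^*(t)\le Q^*(t)$ because $u\ge 2$. Finally, if $q=0$ then $P(t)=-u^{-1}a_0$, so $P^*(t)=|a_0|/u\le|a_0|=Q^*(t)$ and the constraint $t\ge 0$ is vacuous; this completes the argument.

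I do not expect a genuine obstacle: the proof is a one-line reduction followed by a coefficientwise estimate and a geometric sum. The only points demanding a moment's attention are that $(\cdot)^*$ is subadditive rather than additive, which is exactly why the conclusion is an inequality; that the bound $\frac{i!}{(i-k)!}\le q^{k}$ must be taken uniformly in $i\le q$ for the estimate to close up; and that the threshold $t\ge 2\deg(Q)/u$ in the statement is tailored precisely so that the common ratio $q/(ut)$ of the geometric series is at most $\tfrac12$, after which the hypothesis $u\ge 2$ absorbs the leftover factor of $2$.
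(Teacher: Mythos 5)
Your proof is correct and follows essentially the same route as the paper's: both expand the formal inverse as a finite sum, control each term by $Q^*(t)$ times a geometric ratio that $t\geq 2\deg(Q)/u$ forces to be at most $\tfrac12$, and then use $u\geq 2$ to absorb the resulting factor of $2$. The only cosmetic difference is that the paper organizes the double sum over monomials $t^i$ of $Q$ (estimating $(\mathcal A(t^i))^*\leq t^i$ one monomial at a time, via $i(i-1)\cdots(i-k+1)/(ut)^k\leq (i/(ut))^k\leq 2^{-k}$), whereas you organize it by derivative order $k$ (estimating $(Q^{(k)})^*\leq (q/t)^kQ^*$ and summing the geometric series in $k$).
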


\begin{proof}

Assume $Q(t)=a_qt^q+...+a_1t+a_0$
and denote
$$
\mathcal A=(\frac{d}{dt}-u)^{-1}
=-u^{-1}(1+u^{-1}\frac{d}{dt}
+u^{-2}\frac{d^2}{dt^2}+...).
$$
If $0\leq i\leq q$ and $t\geq{2q}/{u}\geq 2i/u$,
$$
(\mathcal A(t^i))^*
=u^{-1}t^i(1+\frac{i}{u t}+\frac{i(i-1)}{(u t)^2}+...
)
\leq\frac{1}{2}t^i(1+\frac{1}{2}+\frac{1}{2^2}+...)\leq t^i
$$
and then
$$
P^*(t)=\left(\sum_{i=0}^qa_i\mathcal A(t^i)\right)^*
\leq\sum_{i=0}^q|a_i|(\mathcal A(t^i))^*\leq\sum_{i=1}^q|a_i|t^i=Q^*(t).
$$
\end{proof}
We have a similar result for $n$-dim polynomials as a direct consequence.
\begin{corollary}
    Suppose $U=\text{diag}(u_1,...,u_n)$ is a diagonal matrix such that $u_i\geq2$ for all $i$.
    If $P(t),Q(t)$ are $n$-dim polynomials such that
$$
(\frac{d}{dt}-U)P(t)=Q(t),
$$    
then 
$$
P^*(t)\leq Q^*(t)
\quad\text{
 for all 
}
\quad
t\geq \frac{2\deg(Q)}{\min_i u_i}.
$$ 
\end{corollary}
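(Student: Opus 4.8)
The plan is to reduce everything to the one-dimensional Lemma 4.10 by working one coordinate at a time. Since $U=\mathrm{diag}(u_1,\dots,u_n)$ is diagonal, the operator $\frac{d}{dt}-U$ acts diagonally on the components, so the hypothesis $(\frac{d}{dt}-U)P(t)=Q(t)$ is equivalent to the $n$ scalar identities $(\frac{d}{dt}-u_i)P_i(t)=Q_i(t)$ for $i=1,\dots,n$. The first step is to upgrade each of these to $P_i(t)=(\frac{d}{dt}-u_i)^{-1}Q_i(t)$. This needs that $(\frac{d}{dt}-u_i)^{-1}$ is a genuine two-sided inverse on polynomials, not merely the right inverse recorded just after its definition; but since $u_i\neq0$, the map $\frac{d}{dt}-u_i$ sends the finite-dimensional space of polynomials of degree $\leq q$ into itself without dropping degree, hence is injective there and therefore invertible, so its right inverse is also a left inverse. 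Applying $(\frac{d}{dt}-u_i)^{-1}$ to $(\frac{d}{dt}-u_i)P_i=Q_i$ then gives $P_i=(\frac{d}{dt}-u_i)^{-1}Q_i$.

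Next I would control the thresholds uniformly. Lemma 4.10 applied to the pair $(P_i,Q_i)$ yields $P_i^*(t)\leq Q_i^*(t)$ for all $t\geq 2\deg(Q_i)/u_i$, using the hypothesis $u_i\geq2$. Since $\deg(Q_i)\leq\deg(Q)$ and $u_i\geq\min_j u_j$, we have $2\deg(Q_i)/u_i\leq 2\deg(Q)/\min_j u_j$ for every $i$, so a single threshold $t_0:=2\deg(Q)/\min_j u_j$ works for all coordinates simultaneously.

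Finally, for $t\geq t_0$ I take the maximum over $i$: by the definitions of $P^*$ and $Q^*$ we get $P^*(t)=\max_i P_i^*(t)\leq\max_i Q_i^*(t)=Q^*(t)$, which is the claimed inequality. There is no real obstacle here — the content is entirely in Lemma 4.10 — and the only points requiring a word of care are the two-sided invertibility of the scalar operator (needed to pass from the differential identity to the $(\frac{d}{dt}-u_i)^{-1}$ form in which Lemma 4.10 is phrased) and the crude but sufficient estimate $2\deg(Q_i)/u_i\leq 2\deg(Q)/\min_j u_j$ that merges the $n$ component thresholds into one.
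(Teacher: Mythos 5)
Your proof is correct and is exactly the ``direct consequence'' argument the paper has in mind (the paper supplies no written proof, just the claim that it follows from Lemma 4.2 component by component). Your extra observation that $\tfrac{d}{dt}-u_i$ preserves degree for $u_i\neq 0$ and is therefore injective on polynomials, so the right inverse $(\tfrac{d}{dt}-u_i)^{-1}$ is also a left inverse, is a genuine and worthwhile clarification: it is what lets you pass from the corollary's hypothesis $(\tfrac{d}{dt}-U)P=Q$ to the form $P_i=(\tfrac{d}{dt}-u_i)^{-1}Q_i$ in which Lemma 4.2 is actually stated.
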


\subsection{Convergence of $\lambda$-series solutions}
\begin{lemma}
\label{lemma_a_J}
Let $n\in\mathbb Z_{>0}$ and $M>0$ and $a_J\in\mathbb R$ for all $J\in\mathbb Z_{\geq0}^m$. $a_J$ is defined inductively as follows.
\begin{align}
 &  a_0=0   \\
 &  a_J=1  &\text{if $|J|=1$}\\
 &  a_J=\sum_{I\in\mathbb Z^n_{\geq0}:|I|\geq2}M^{|I|}
\sum_{J_{i,j}:\sum_{i=1}^n\sum_{j=1}^{I_i}J_{i,j}=J}
\prod_{i,j}a_{J_{i,j}}
&\text{ if $|J|\geq2$.}
\end{align}
Then there exists $R>0$ such that 
$|a_J|\leq R^{|J|}$.
\end{lemma}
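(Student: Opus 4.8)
The plan is to recognize the $a_J$ as the Taylor coefficients of a genuine analytic function obtained by solving a scalar functional equation, and then to read the bound $|a_J|\le R^{|J|}$ off from analyticity.

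First I would form the generating function $A(y)=\sum_{J\in\mathbb Z^m_{\geq0}}a_Jy^J$ in the $m$ formal variables $y=(y_1,\dots,y_m)$ and rewrite the recursion as a single identity of formal power series. Put
$$
\psi(w):=\sum_{I\in\mathbb Z^n_{\geq0}:\,|I|\geq2}M^{|I|}w^{|I|}
=\sum_{k\geq2}\binom{n+k-1}{n-1}M^kw^k
=\frac{1}{(1-Mw)^n}-1-nMw,
$$
which is an honest analytic function of one variable near $w=0$ with $\psi(0)=0$ and $\psi'(0)=0$. The claim is that the recursion ($a_0=0$, $a_J=1$ for $|J|=1$, and the displayed formula for $|J|\ge2$) is equivalent to
$$
A(y)=(y_1+\cdots+y_m)+\psi\big(A(y)\big).
$$
The content of this is that, for each fixed $I\in\mathbb Z^n_{\geq0}$ with $|I|\ge2$, the coefficient of $y^J$ in $M^{|I|}\prod_{i=1}^nA(y)^{I_i}=M^{|I|}\prod_{i=1}^n\prod_{j=1}^{I_i}A(y)$ equals $M^{|I|}\sum_{J_{i,j}:\,\sum_{i,j}J_{i,j}=J}\prod_{i,j}a_{J_{i,j}}$; summing over such $I$ and comparing the degree-$0$ and degree-$1$ parts with $a_0=0$, $a_{\vec e_i}=1$ gives the identity. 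This is the same bookkeeping that produced equation (3.1), so I would not belabour it.

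Next I would solve the functional equation. The map $w\mapsto w-\psi(w)$ is analytic near $0$, vanishes at $0$, and has derivative $1\ne0$ there, so by the analytic inverse function theorem it has an analytic local inverse $B$ with $B(0)=0$, and this $B$ satisfies $B(s)=s+\psi(B(s))$. A standard lowest-degree comparison shows the equation $F=s+\psi(F)$ has a unique formal solution with vanishing constant term (if $F,F'$ are two, then $F-F'=\psi(F)-\psi(F')=(F-F')\Phi$ with $\Phi$ in the maximal ideal, forcing $F=F'$), and since $B(y_1+\cdots+y_m)$ is such a solution we conclude $A(y)=B(y_1+\cdots+y_m)$. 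Writing $B(s)=\sum_{k\ge1}\alpha_ks^k$, analyticity of $B$ gives $|\alpha_k|\le C^k$ for some $C>0$; expanding the powers of $y_1+\cdots+y_m$ then yields $a_J=\alpha_{|J|}\binom{|J|}{J_1,\dots,J_m}$, hence $|a_J|\le C^{|J|}m^{|J|}$, and the lemma holds with $R=Cm$.

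The only point that calls for care is the coefficient identity of the second paragraph — checking that the combinatorial sum over the tuples $(J_{i,j})$ is exactly the $y^J$-coefficient of $A(y)^{|I|}$ — but this is purely formal and parallels the computations of Section 3, so I do not anticipate a real obstacle; once it is in hand, the whole lemma reduces to the one-variable analytic inverse function theorem.
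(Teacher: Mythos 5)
Your proof is correct and takes essentially the same approach as the paper: both recognize the $a_J$ as Taylor coefficients of a generating function satisfying the functional equation $A=(y_1+\cdots+y_m)+\psi(A)$ with $\psi(w)=\frac{1}{(1-Mw)^n}-1-nMw$, and both deduce analyticity (hence the geometric coefficient bound) from the analytic implicit/inverse function theorem. Your extra observation that $A(y)=B(y_1+\cdots+y_m)$ for a one-variable $B$ is a mild simplification, but the argument is the same one the paper runs via the two-variable $F(x,y)=\frac{1}{(1-My)^n}-(Mn+1)y+(x_1+\cdots+x_m)$.
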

\begin{proof}
Consider the following analytic function $F(x_1,...,x_m,y)$ of $(m+1)$ variables.
$$
F(x_1,...,x_m,y)=\frac{1}{(1-My)^n}-(Mn+1)y+(x_1+...+x_m).
$$
We have that $F(0)=1$ and $F'_y(0)=-1\neq0$. Then by the analytic implicit function theorem, there exists an analytic function $f(x)=\sum_{J}b_Jx^J$ near $0\in\mathbb R^m$ such that $f(0)=0$ 
and 
$$
F(x_1,...,x_m,f(x_1,...,x_m))=1,
$$
i.e.,
\begin{equation}
\label{Mn}
1-(x_1+...+x_m)+(Mn+1)f=\frac{1}{(1-Mf)^n}
=(1+Mf+(Mf)^2+...)^n.
\end{equation}

It suffices to show that $b_J=a_J$ for all $J$.
Clearly we have that $b_0=0=a_0$ and $b_J=1=a_J$ for all $J$ with $|J|=1$.
Then it suffices to show that
$$
b_J=\sum_{I\in\mathbb Z^n_{\geq0}:|I|\geq2}M^{|I|}
\sum_{J_{i,j}:\sum_{i=1}^n\sum_{j=1}^{I_i}J_{i,j}=J}
\prod_{i,j}b_{J_{i,j}}
$$
for all $J$ with $|J|\geq2$. 
Now we fix $J\in\mathbb Z^m_{\geq0}$ with $|J|\geq2$. The coefficient of $x^J$ on the left-hand side of equation (\ref{Mn}) is 
$$
(Mn+1)b_J.
$$
Denote $C_J(P)$ as the coefficient of $x^J$ in the power series $P$. 
Then the coefficient of $x^{J}$ on the right-hand side of equation (\ref{Mn}) is 
$$
\sum_{I\in\mathbb Z^n_{\geq1}}
C_{J}\left(\prod_{i=1}^n (Mf)^{I_i}\right)
=
\sum_{I\in\mathbb Z^n_{\geq1}}M^{|I|}
C_{J}\left(\prod_{i=1}^n f^{I_i}\right)
$$
where
$$
C_{J}\left(\prod_{i=1}^n f^{I_i}\right)=
C_{J}\left(\prod_{i=1}^n (\sum_{J}b_{J}x^{J})^{I_i}\right)=
\sum_{J_{i,j}:\sum_{i=1}^n\sum_{j=1}^{I_i}J_{i,j}=J}\prod_{i,j}b_{J_{i,j}}.
$$
It remains to show that
$$
Mnb_J=
\sum_{I\in\mathbb Z^n_{\geq0}:|I|=1}M^{|I|}
\sum_{J_{i,j}:\sum_{i=1}^n\sum_{j=1}^{I_i}J_{i,j}=J}
\prod_{i,j}b_{J_{i,j}}.
$$
This is a consequence of that for all $I$ with $|I|=1$
$$
\sum_{J_{i,j}:\sum_{i=1}^n\sum_{j=1}^{I_i}J_{i,j}=J}
\prod_{i,j}b_{J_{i,j}}=b_J.
$$
\end{proof}

\begin{theorem}
\label{limit behavior of analytic field}
    Let $U\ni0$ be an open set in $\mathbb R^m$ and $V$ be an analytic vector field on $U$ such that $V(0)=0$ and 
    $$
    DV(0)=\text{diag}(\lambda_1,...,\lambda_m)
    $$
    for some $\lambda=(\lambda_1,...,\lambda_m)\in\mathbb R^m_{<0}$.
    Suppose the $n$-dim $\lambda$-series
    $$
    x(t)=\sum_{J\in\mathbb Z^m_{\geq0}}P_J(t)e^{\lambda\cdot Jt}
    $$
    formally satisfies that $P_0=0$ and $x'(t)=V(x(t))$ as $\lambda$-series.
    Then we have the following.
    
    (a) There exists $T>0$ and $r>0$ such that
    $|P_J(t)|\leq t^{r|J|}$ for all $J\in\mathbb Z^m_{\geq0}$ and $t\geq T$.

    (b) $x(t),x'(t)$ converge and satisfy $x'(t)=V(x(t))$ as functions for sufficiently large $t$. 

    (c) If $\lambda_0>\max_i\lambda_i$ and $C\in\mathbb R^m$ is nonzero and $x(t;c)$ denotes the formal solution defined as in Section 4.2, as $t\rightarrow\infty$ we have
    $$
x(t;c+C)-x(t;c)=\sum_{i=1}^mC_ie^{\lambda_it}\vec e_i+o(e^{(\lambda_C+\lambda_0)t})
$$
where $\lambda_C=\max\{\lambda_i:i\in\{1,...,m\},C_i\neq0\}$.
\end{theorem}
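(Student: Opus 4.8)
\section*{Proof proposal}

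The plan is to prove part (a) by a dominating-function (majorant-series) argument and then to read off (b) and (c) from it, using the convergence results of Section~3.3 and the construction of Section~4.2. As in Section~4.1 the identity $x'=V(x)$ unfolds into $\big(\tfrac{d}{dt}-(\Lambda-\lambda\cdot J)\big)P_J=Q_J$ with $Q_J$ the quadratic-and-higher expression of~(4.1.2); when $\Lambda-\lambda\cdot J$ is invertible this determines $P_J$ uniquely, and the resolvent $(\tfrac{d}{dt}-u)^{-1}$ is degree-preserving on polynomials. The first thing I would record are two ``confinement'' facts: there is $N_0$ with $\lambda_i-\lambda\cdot J\ge 2$ for all $i$ whenever $|J|\ge N_0$ (so resonances and near-resonances occur only for boundedly many $J$), and, enlarging $N_0$, one also has $\min_i(\lambda_i-\lambda\cdot J)\ge\tfrac12(\min_k|\lambda_k|)\,|J|$ for $|J|\ge N_0$, since $\lambda_i-\lambda\cdot J\ge -\max_k|\lambda_k|+(\min_k|\lambda_k|)|J|$. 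From the first fact, a strong induction on $|J|$ gives a \emph{linear} degree bound $\deg P_J\le K_0|J|$: the finitely many $|J|<N_0$ are absorbed into the constant $K_0$, and for $|J|\ge N_0$ one has $\deg P_J=\deg Q_J\le\max\sum_{i,j}\deg P_{J_{i,j}}\le K_0\sum|J_{i,j}|=K_0|J|$, using that each $J_{i,j}$ is a strictly smaller multi-index and there are at least two nonzero factors. Combining this with the second fact, the threshold $2\deg Q_J/\min_i(\lambda_i-\lambda\cdot J)$ from the dominating-function corollary of Section~4.3 is bounded above by a constant $T_\ast$ independent of $J$.

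Next I would run the majorant induction. Let $a_J$ be the sequence of Lemma~\ref{lemma_a_J} with its constant taken at least $\max(1,M^2)$, where $|b_I|\le M^{|I|+1}$; then $a_J>0$ for $|J|\ge1$ and $a_J\le R^{|J|}$ by that lemma. I claim $P_J^*(t)\le a_J\,t^{K_0|J|}$ for all $J$ and all $t\ge T$, with a suitable $T\ge T_\ast$: for the finitely many $|J|<N_0$ this is a polynomial comparison, and for $|J|\ge N_0$, $t\ge T$, the Section~4.3 corollary gives $P_J^*(t)\le Q_J^*(t)$, which by parts (a),(c) of the dominating-function proposition of Section~4.3, the estimate $|b_I|\le M^{|I|+1}\le(M^2)^{|I|}$ for $|I|\ge1$, and the inductive hypothesis on the (strictly smaller, at least two) factors $P_{J_{i,j}}$, is bounded by $t^{K_0|J|}\sum_{|I|\ge2}(M^2)^{|I|}\sum_{\sum J_{i,j}=J,\,J_{i,j}\neq0}\prod_{i,j}a_{J_{i,j}}=a_J\,t^{K_0|J|}$ by the defining recursion of $a_J$. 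Since $|P_J(t)|\le m\,P_J^*(t)\le(mR)^{|J|}t^{K_0|J|}\le t^{(K_0+1)|J|}$ for $t\ge\max(T,mR)$, part (a) follows with $r=K_0+1$. I expect the main obstacle to be exactly this interplay: the degree of $P_J$ really does grow linearly in $|J|$ (one loses a degree at each resonance), so one must simultaneously confine resonances to bounded $|J|$ and keep the resolvent threshold of Section~4.3 bounded, which is possible only because $\min_i(\lambda_i-\lambda\cdot J)$ and $\deg Q_J$ grow at the same linear rate.

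For (b): part (a) together with the convergence criterion of Section~3.3 shows that $x(t)=\sum_J P_J(t)e^{\lambda\cdot Jt}$ converges absolutely and uniformly for large $t$; the same criterion applied to the formal derivative $\sum_J\big(P_J'(t)+\lambda\cdot J\,P_J(t)\big)e^{\lambda\cdot Jt}$, whose coefficients are easily bounded by $t^{r'|J|}$ for large $t$ and a suitable $r'$, shows that series converges uniformly as well, so term-by-term differentiation is legitimate and $\tfrac{d}{dt}x(t)$ equals the limit of the formal-derivative $\lambda$-series. By hypothesis that formal $\lambda$-series equals $(V\circ x)$; since $P_0=0$, the convergence criterion (with a negative exponent) forces $x(t)\to0$, so for large $t$ the point $x(t)$ lies in the domain where $V$ is analytic, and Proposition~\ref{analytic_lambda} applied to each scalar component of $V$ gives that the limit of that component of $(V\circ x)$ equals the same component of $V(x(t))$. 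Hence $\tfrac{d}{dt}x(t)=V(x(t))$ for all sufficiently large $t$.

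For (c): the key structural fact is that, in the Section~4.2 construction, writing $P_J(t;c)=\sum_K p_{J,K}(t)\,c^K$ as a polynomial in $c$ with polynomial-in-$t$ coefficients, one has $p_{J,K}(t)=0$ unless $\lambda\cdot K=\lambda\cdot J$ (and, for $|J|\ge2$, unless $|K|\ge2$). This follows by strong induction on $|J|$: it is immediate for $|J|\le1$ since $P_{\vec e_i}(t;c)=c_i\vec e_i$ and $\lambda\cdot\vec e_i=\lambda_i$, and for $|J|\ge2$ every monomial of $Q_J(t;c)$ is a product $\prod_{i,j}(P_{J_{i,j}}(t;c))_i$ over decompositions $\sum J_{i,j}=J$ into at least two nonzero pieces, so by the inductive hypothesis its $c$-exponent $K$ satisfies $\lambda\cdot K=\sum\lambda\cdot J_{i,j}=\lambda\cdot J$ and $|K|\ge2$, while the resolvent does not affect the $c$-dependence. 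Also $\{J:\lambda\cdot J=v\}$ is finite for each $v$ because $\lambda\in\R^m_{<0}$. Now set $D(t):=x(t;c+C)-x(t;c)=\sum_J D_J(t)e^{\lambda\cdot Jt}$ with $D_J=P_J(\cdot;c+C)-P_J(\cdot;c)$; by part (a) applied to $x(\cdot;c)$ and $x(\cdot;c+C)$ the coefficients satisfy $|D_J(t)|\le t^{q|J|}$ for large $t$ and some $q$, so the convergence criterion of Section~3.3 with $a:=\lambda_C+\lambda_0$ gives $D(t)=\sum_{J:\,\lambda\cdot J\ge\lambda_C+\lambda_0}D_J(t)e^{\lambda\cdot Jt}+o(e^{(\lambda_C+\lambda_0)t})$, a finite sum. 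Here $D_0=0$; and for $|J|\ge2$, if $D_J\neq0$ then some $c^K$ with $p_{J,K}\neq0$ (hence $|K|\ge2$ and $\lambda\cdot K=\lambda\cdot J$) has $(c+C)^K\neq c^K$, which forces $K_i\ge1$ for some $i$ with $C_i\neq0$, so $\lambda\cdot J=\lambda\cdot K=\lambda_i+\lambda\cdot(K-\vec e_i)\le\lambda_C+\max_k\lambda_k<\lambda_C+\lambda_0$ (using $\lambda_i\le\lambda_C$, $|K-\vec e_i|\ge1$, $\max_k\lambda_k<0$, and $\lambda_0>\max_k\lambda_k$); thus such $J$ are excluded. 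Hence only the $|J|=1$ terms remain, contributing $\sum_{i:\,\lambda_i\ge\lambda_C+\lambda_0}C_ie^{\lambda_it}\vec e_i$, and since the omitted indices have $\lambda_i<\lambda_C+\lambda_0$ and hence $C_ie^{\lambda_it}\vec e_i=o(e^{(\lambda_C+\lambda_0)t})$, we conclude $D(t)=\sum_{i=1}^m C_ie^{\lambda_it}\vec e_i+o(e^{(\lambda_C+\lambda_0)t})$.
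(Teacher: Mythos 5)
Your argument follows the paper's approach in all three parts: the same majorant induction via the resolvent degree-preservation and the $a_J$ recursion of Lemma~4.4 for part~(a), the same term-by-term differentiation plus Proposition~\ref{analytic_lambda} for part~(b), and the same reduction via Proposition~3.2(b) to frequencies $\lambda\cdot J\geq\lambda_C+\lambda_0$ for part~(c). Your part~(c) is phrased through the explicit structural invariant $p_{J,K}\neq0\Rightarrow\lambda\cdot K=\lambda\cdot J$ rather than the paper's statement that $P_J(t;c)=P_J(t;c+C)$ whenever $J_i=0$ for all $i$ with $C_i\neq0$, but this is the same induction seen from a slightly different angle and both are correct.
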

\begin{proof}
(a) 

Without loss of generality, we may assume 
$
0>\lambda_1\geq...\geq\lambda_m.
$
Denote $\Lambda=\text{diag}(\lambda_1,...,\lambda_m)=DV(0)$ and suppose 
$$
V(x)=\Lambda x+\sum_{I\in\mathbb Z^m_{\geq0}:|I|\geq2}b_Ix^I.
$$
Since formally $x'(t)=V(x(t))$ we have that
$$
P_J'(t)+\lambda\cdot JP_J(t)
=\Lambda P_J(t)+Q_J(t)
$$
where
$$
Q_J(t)=\sum_{I\in\mathbb Z_{\geq0}^m:|I|\geq2}b_I
\sum_{J_{k,l}:
\sum_{k=1}^m\sum_{l=1}^{I_k}J_{k,l}=J
}
\prod_{k,l}(P_{J_{k,l}})_k(t).
$$
So
$$
Q_J^*(t)\leq\sum_{I\in\mathbb Z_{\geq0}^m:|I|\geq2}|b_I|
\sum_{J_{k,l}:
\sum_{k=1}^m\sum_{l=1}^{I_k}J_{k,l}=J
}
\prod_{k,l}(P^*_{J_{k,l}})(t).
$$
and
\begin{equation}
\left(\frac{d}{dt}-(\Lambda-\lambda\cdot J)\right)
P_J=Q_J
\end{equation}
Here $\Lambda-\lambda\cdot J$ is a diagonal matrix whose smallest diagonal entry is 
$$
(\lambda_m-\lambda\cdot J).
$$
Pick $s\in\mathbb Z_{\geq0}$ sufficiently large such that for all $J$ with $|J|\geq s$, 
$$
\lambda_m-\lambda\cdot J\geq\frac{-\lambda_1 |J|}{2}\geq2.
$$
Then for any $J$ with $|J|\geq s$,
$\deg (P_J)=\deg(Q_J)$.

Let $p\in\mathbb R_{\geq0}$ be a constant such that 
$$
\deg (P_J)\leq p|J|\quad\text{ and }\quad\deg(Q_J)\leq p|J|
$$
for all $J\in\mathbb Z^m_{\geq0}$ with $|J|< s$.
Then by induction it is straightforward to see that 
$$
\deg (P_J)\leq p|J|\quad\text{ and }\quad\deg(Q_J)\leq p|J|
$$
for all $J$.
By Corollary 4.3, for $J\in\mathbb Z^m_{\geq0}$ with $|J|\geq s$ and 
$$
t
\geq\frac{4p}{-\lambda_1}
=\frac{2p|J|}{-\lambda_1|J|/2}
\geq\frac{2\deg(Q_J)}{\lambda_m-\lambda\cdot J}
$$ 
we have 
$P_J^*(t)\leq Q_J^*(t)$.

Pick a sufficiently large $q$ such that
$$
P_J^*(t)\leq t^{q|J|}
$$
for all $t\geq2$ and $J\in\mathbb Z^m_{\geq0}$ with $|J|\leq s$. 
Suppose $|b_I|\leq M^{|I|}$ for some $M>0$ and all $I\in\mathbb Z^m_{\geq0}$.
Let $a_J$ be defined as in Lemma 4.4.
By induction it is straightforward to see that $P_J^*(t)\leq a_Jt^{q|J|}$ for all 
$$
t\geq\max\{\frac{4p}{-\lambda_1},2\}
$$ 
and $J\in\mathbb Z^m_{\geq0}$.
By Lemma \ref{lemma_a_J} there exists $R>0$ such that
$a_J\leq R^{|J|}$. So 
$$
|P_J(t)|_\infty\leq P^*_J(t)\leq R^{|J|}t^{q|J|}
\leq 2^{|J|\log_2 R}\cdot t^{q|J|}
\leq t^{(q+\log_2R)|J|}
$$
for all $J$ and 
$$
t\geq\max\{\frac{4p}{-\lambda_1},2\}.
$$ 

(b) By Propositions 3.2 (a) and 3.3, $x(t),V(x(t))$ converge uniformly. Since any partial sum of $x'(t)=V(x(t))=(V\circ x)(t)$ is the derivative of the corresponding partial sum of $x(t)$ as functions, $x'(t)=V(x(t))$ is the derivative of $x(t)$ as functions.

(c) By Proposition 3.2 (b), we only need to compare the coefficients of $e^{\lambda\cdot Jt}$ for nonzero $J$ satisfying $\lambda\cdot J\geq\lambda_C+\lambda_0$. Such $J$ could either be $\vec e_i$ or satisfy that $J_i=0$ if $C_i\neq0$. The first case is immediate from the definition. In the second case of $J$, we can straightforwardly show that
$P_J(t;c)=P_J(t,c+C)$ by induction.
\end{proof}

\section{Limiting behavior for piecewise analytic vector fields}
\subsection{Preliminary estimates}
\begin{lemma}
Let $U\ni0$ be an open domain in $\mathbb R^n$ and $U\subset \cup_iD_i$ be a closed finite cover. Suppose $V(x)$ is a $C^1$ vector field on $U$ such that $V(x)$ is analytic on each $D_i\cap U$. Then 
$$
V(x)-V(y)=DV(0)(x-y)+O((|x|+|y|)|x-y|).
$$
\begin{proof}
On a small neighborhood $U_0$ of $0$, $DV(x)$ is  Lipschitz continuous on each $D_i\cap U_0$, since $V(x)$ is analytic on $D_i\cap U_0$. Since $DV(x)$ is continuous on $U_0$, it is easy to see that $DV(x)$ is Lipschitz continuous on $U_0$. So on $U_0$ we may assume that
$$
|DV(x)-DV(0)|\leq C|x|.
$$
for some constant $C>0$.
Then
$$
V(x)-V(y)-DV(0)(x-y)
$$
$$
=\left(\int_0^1DV(tx+(1-t)y)(x-y)dt\right)-DV(0)(x-y)  
$$
$$
=\int_0^1\bigg(DV(tx+(1-t)y)-DV(0)\bigg)(x-y)dt 
$$
$$
\leq\left(\int_0^1C|(tx+(1-t)y)|_2dt\right)
|x-y|_2
$$
$$
=O((|x|+|y|)|x-y|).
$$
\end{proof}
\end{lemma}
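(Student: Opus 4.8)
The plan is to reduce the estimate to the Lipschitz bound
$$
|DV(x)-DV(0)|\le L|x|\qquad\text{for }x\text{ near }0,
$$
and then integrate $DV$ along the straight segment from $y$ to $x$. The only step that genuinely uses the hypotheses is this Lipschitz bound: piecewise analyticity gives a Lipschitz estimate for $DV$ on each piece $D_i$ separately, and the convexity of the $D_i$ is what lets these per-piece estimates be glued into a bound valid on a full neighborhood of $0$.

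To prove the Lipschitz bound I would first fix a closed ball $\overline B$ centered at $0$ with $\overline B\subseteq U$. For each $i$, by definition there is an open set $\tilde U_i\supseteq D_i\cap U$ and an analytic vector field $\tilde V_i$ on $\tilde U_i$ with $\tilde V_i\equiv V$ on $D_i\cap U$; then $D\tilde V_i$ is analytic, hence $C^1$, on $\tilde U_i$. On the open set $\mathrm{int}(D_i)\cap U$ the $C^1$ fields $V$ and $\tilde V_i$ coincide, so $DV=D\tilde V_i$ there; since a convex polytope is the closure of its interior, $\mathrm{int}(D_i)\cap U$ is dense in $D_i\cap U$, and as $DV$ and $D\tilde V_i$ are both continuous on $D_i\cap U$ we conclude $DV=D\tilde V_i$ on all of $D_i\cap U$. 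In particular $DV$ restricted to $D_i$ is $C^1$ on the compact set $D_i\cap\overline B$, hence Lipschitz there with some constant $L_i$; set $L=\max_i L_i$. Now take $x,z\in B$. The segment $[x,z]$ lies in $B\subseteq\cup_iD_i$, and since there are finitely many $D_i$ and each is convex, $[x,z]\cap D_i$ is a sub-segment, so $[x,z]$ is the union of finitely many consecutive collinear sub-segments $[p_{j-1},p_j]$ ($j=1,\dots,N$, $p_0=x$, $p_N=z$), each contained in some $D_{i_j}$. Then
$$
|DV(x)-DV(z)|=\Big|\sum_{j=1}^N\big(DV(p_{j-1})-DV(p_j)\big)\Big|\le\sum_{j=1}^N L_{i_j}|p_{j-1}-p_j|\le L\sum_{j=1}^N|p_{j-1}-p_j|=L|x-z|,
$$
the last equality because the $p_j$ lie in order on the segment. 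Taking $z=0$ gives $|DV(x)-DV(0)|\le L|x|$ on $B$.

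The conclusion is then a standard mean-value estimate. Shrinking the neighborhood of $0$ so that $[y,x]\subseteq B$ for $x,y$ near $0$, the fundamental theorem of calculus gives $V(x)-V(y)=\int_0^1 DV\big(y+t(x-y)\big)(x-y)\,dt$, and subtracting $DV(0)(x-y)=\int_0^1 DV(0)(x-y)\,dt$ yields
$$
\big|V(x)-V(y)-DV(0)(x-y)\big|\le\int_0^1\big|DV(y+t(x-y))-DV(0)\big|\,|x-y|\,dt\le\int_0^1 L\,\big|(1-t)y+tx\big|\,|x-y|\,dt,
$$
which is at most $L(|x|+|y|)\,|x-y|$ because $|(1-t)y+tx|\le|x|+|y|$. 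This is the claimed $O((|x|+|y|)|x-y|)$; as all norms on $\mathbb R^n$ are equivalent the choice of norm is immaterial.

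The main obstacle is the gluing step: for a general finite closed cover, ``$DV$ Lipschitz on each piece'' together with global continuity of $DV$ need not imply a local Lipschitz bound, and it is precisely the convex-polytope hypothesis — which forces every straight segment to meet each $D_i$ in a single connected sub-segment, hence to be cut into finitely many pieces — that makes the telescoping argument legitimate. The identification $DV=D\tilde V_i$ on $D_i\cap U$ and the final integral estimate are routine.
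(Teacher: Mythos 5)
Your overall skeleton is the same as the paper's: first establish $|DV(x)-DV(0)|\le C|x|$ on a neighborhood of $0$, then integrate $DV$ along the straight segment from $y$ to $x$ and subtract $DV(0)(x-y)$; that reduction and the final mean-value estimate coincide with the paper's proof. The deviation, and the genuine gap, is in the gluing step. The lemma is stated for an arbitrary finite closed cover $U\subseteq\cup_i D_i$; convexity of the pieces is \emph{not} a hypothesis, and in fact the paper applies Lemma 5.1 in the proof of Theorem 1.3 to the cover by the sets $p_m(M\cap D_i)$, which are projections of pieces of the analytic graph $M$ and are not convex polytopes. Your telescoping argument uses in an essential way that $[x,z]\cap D_i$ is a sub-segment, so as written it proves the estimate only under an added convexity assumption and covers neither the statement as given nor the paper's actual use of it.

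Moreover, the claim you lean on --- that for a general finite closed cover, per-piece Lipschitz bounds for $DV$ together with global continuity of $DV$ ``need not'' give a local Lipschitz bound --- is false, and supplying the continuity-based gluing is exactly the missing idea (it is what the paper's ``it is easy to see'' refers to, and it is where the $C^1$ hypothesis is genuinely used). Concretely: fix a convex neighborhood $U_0$ of $0$ on which each $DV|_{D_i\cap U_0}$ is $L$-Lipschitz, take $x,z\in U_0$, and set $g(s)=DV\big((1-s)x+sz\big)$ on $[0,1]$. The sets $F_i=\{s\in[0,1]:(1-s)x+sz\in D_i\}$ are closed (not necessarily intervals) and cover $[0,1]$, $g$ is continuous, and $g|_{F_i}$ is $L|x-z|$-Lipschitz. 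One shows $|g(1)-g(0)|\le L|x-z|$ by induction on the number of pieces: if $0\in F_{i_0}$, let $c=\max F_{i_0}$; then $|g(c)-g(0)|\le L|x-z|\,c$, the interval $(c,1]$ is covered by the remaining pieces, the inductive hypothesis applies on $[c+\epsilon,1]$ for every small $\epsilon>0$, and letting $\epsilon\to0$ and using continuity of $g$ at $c$ gives $|g(1)-g(c)|\le L|x-z|(1-c)$. This yields $|DV(x)-DV(z)|\le L|x-z|$ for arbitrary closed pieces, with no convexity; with this replacement the rest of your argument (the identification of $DV$ with the derivative of the analytic extension on each piece, and the integral estimate) goes through as in the paper.
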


\begin{lemma}
    Suppose $V(x)$ is a $C^1$ vector field on a neighborhood $U$ of $0$ in $\mathbb R^m$, such that 
    $$
    V(x)=\Lambda x+O(|x|^2)
    $$
    where 
    $$
    \Lambda=\text{diag}(\lambda_1,...,\lambda_m)
    $$
    and
    $$
    0>\lambda_1\geq...\geq\lambda_m.
    $$
    If $x'(t)=V(x(t))$ on $[0,\infty)$ and $x(t)=o(1)$ as $t\rightarrow\infty$, then
    $x(t)=O(e^{\lambda_1t})$ as $t\rightarrow\infty$.
\end{lemma}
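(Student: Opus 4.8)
The plan is to run the standard variation-of-constants argument: first extract a slightly lossy decay rate $O(e^{(\lambda_1+\epsilon)t})$ via Gronwall, and then bootstrap it to the sharp rate using that the nonlinearity is quadratic.

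First I would set $g(x)=V(x)-\Lambda x$, so that $|g(x)|\leq C|x|^2$ on some neighborhood $U_0$ of $0$, and record the elementary fact that, since $\Lambda=\text{diag}(\lambda_1,\dots,\lambda_m)$ with $\lambda_1=\max_i\lambda_i$, one has $|e^{\Lambda\tau}v|\leq e^{\lambda_1\tau}|v|$ for every $\tau\geq0$ and $v\in\mathbb R^m$ (check it componentwise: $|(e^{\Lambda\tau}v)_i|=e^{\lambda_i\tau}|v_i|\leq e^{\lambda_1\tau}|v_i|$). Since $x(t)\to0$, choose $t_0$ so that $x(t)\in U_0$ for all $t\geq t_0$. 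From $\frac{d}{dt}(e^{-\Lambda t}x(t))=e^{-\Lambda t}(x'(t)-\Lambda x(t))=e^{-\Lambda t}g(x(t))$ and integration from $t_0$ I obtain the Duhamel identity
$$
x(t)=e^{\Lambda(t-t_0)}x(t_0)+\int_{t_0}^t e^{\Lambda(t-s)}g(x(s))\,ds,\qquad t\geq t_0.
$$

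Next, for any fixed $\epsilon>0$, enlarging $t_0$ so that $C|x(s)|\leq\epsilon$ for $s\geq t_0$ (possible since $x(s)\to0$), we get $|g(x(s))|\leq\epsilon|x(s)|$ there. Taking norms in the Duhamel identity, using $|e^{\Lambda\tau}|\leq e^{\lambda_1\tau}$, and multiplying through by $e^{-\lambda_1 t}$, the function $u(t):=e^{-\lambda_1 t}|x(t)|$ satisfies $u(t)\leq u(t_0)+\epsilon\int_{t_0}^t u(s)\,ds$, whence Gronwall gives $u(t)\leq u(t_0)e^{\epsilon(t-t_0)}$, i.e. $|x(t)|\leq C_\epsilon e^{(\lambda_1+\epsilon)t}$ for $t\geq t_0$. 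Finally I would choose $\epsilon$ small enough that $\mu:=\lambda_1+\epsilon$ satisfies $2\mu<\lambda_1$ — possible precisely because $\lambda_1<0$, which forces $-\lambda_1/2>0$ — so that $|g(x(s))|\leq C|x(s)|^2\leq CC_\epsilon^2 e^{2\mu s}$ for large $s$; feeding this back into the Duhamel identity bounds the integral term by
$$
\int_{t_0}^t e^{\lambda_1(t-s)}CC_\epsilon^2 e^{2\mu s}\,ds=CC_\epsilon^2 e^{\lambda_1 t}\int_{t_0}^t e^{(2\mu-\lambda_1)s}\,ds,
$$
and since $2\mu-\lambda_1<0$ this last integral stays bounded as $t\to\infty$, so the whole term is $O(e^{\lambda_1 t})$; the homogeneous term $e^{\Lambda(t-t_0)}x(t_0)$ is manifestly $O(e^{\lambda_1 t})$ as well, giving $x(t)=O(e^{\lambda_1 t})$.

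The only genuine subtlety is obtaining the exact exponent $\lambda_1$ rather than $\lambda_1+\epsilon$, and the bootstrap above is exactly where this is won: it crucially exploits that $g$ is $O(|x|^2)$, not merely linear and small, so that the doubled exponent $2\mu$ can be pushed below $\lambda_1$. Everything else — the validity of the Duhamel formula on $[t_0,\infty)$ (here $x$ remaining in the domain of $V$ is automatic since $x(t)\to0$), the operator bound on $e^{\Lambda\tau}$, and the scalar Gronwall inequality — is routine.
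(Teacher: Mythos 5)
Your proof is correct. The only thing to note is that it follows a genuinely different route from the paper. The paper works with the scalar energy $|x|_2^2$ and its differential inequality: from $(|x|_2^2)'=2x\cdot V(x)\leq 2\lambda_1|x|_2^2+O(|x|^3)$ it first shows $(e^{-2\lambda_0 t}|x|_2^2)'<0$ for any $\lambda_0>\lambda_1$ (yielding the lossy rate $O(e^{\lambda_0 t})$), and then, fixing $\lambda_0=3\lambda_1/4$ so that $3\lambda_0-2\lambda_1<0$, integrates $(e^{-2\lambda_1 t}|x|_2^2)'=O(e^{(3\lambda_0-2\lambda_1)t})$ to get the sharp rate. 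You instead run the vector-valued Duhamel identity, extract the lossy rate $O(e^{(\lambda_1+\epsilon)t})$ with Gronwall, and feed the resulting quadratic bound on $g$ back through Duhamel, exactly exploiting $2(\lambda_1+\epsilon)<\lambda_1$ for small $\epsilon$. Both arguments share the same two-stage bootstrap structure and both lean on the diagonality of $\Lambda$ (the paper via $x\cdot\Lambda x\leq\lambda_1|x|_2^2$, you via the semigroup bound $|e^{\Lambda\tau}v|\leq e^{\lambda_1\tau}|v|$). The energy method is a shade more elementary (no Gronwall, no matrix exponential); the variation-of-constants route is closer to the standard stable-manifold toolkit and would adapt more transparently if $\Lambda$ were replaced by a general Hurwitz matrix via an operator-norm bound on $e^{\Lambda\tau}$.
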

\begin{proof} As $t\rightarrow\infty$,
    $$
    (|x|_2^2)'=2x\cdot x'=2x\cdot V(x)
    $$
    $$
    =2x\cdot (\Lambda x+O(|x|^2))
    \leq 2\lambda_1|x|_2^2+O(|x|^3).
    $$
    If $\lambda_0>\lambda_1$,
    $$
    (e^{-2\lambda_0t}|x|_2^2)'=
    -2\lambda_0e^{-2\lambda_0t}|x|_2^2+(2\lambda_1|x|_2^2+O(|x|^3))e^{-2\lambda_0t}
    $$
    $$
    \leq(2\lambda_1-2\lambda_0)e^{-2\lambda_0t}|x|_2^2+e^{-2\lambda_0t}O(|x|)^3.
    $$
So $(e^{-2\lambda_0t}|x|_2^2)'<0$ if $t$ is sufficiently large, and then 
$$
e^{-2\lambda_0t}|x|_2^2=O(1)
$$
and
$$
|x|=O(e^{\lambda_0t}).
$$

So 
    $$
    (e^{-2\lambda_1t}|x|_2^2)'=
    -2\lambda_1e^{-2\lambda_1t}|x|_2^2+(2\lambda_1|x|_2^2+O(|x|^3))e^{-2\lambda_1t}
    $$
    
    $$
    =e^{-2\lambda_1t}O(|x|)^3
    =O(e^{(3\lambda_0-2\lambda_1)t}).
    $$
So if $\lambda_0=3\lambda_1/4>\lambda_1$
$$
e^{-2\lambda_1t}|x|_2^2
=|x(0)|_2^2+\int_0^tO(e^{\lambda_1s/4})ds
=|x(0)|_2^2+O(1)=O(1)
$$
and $|x|=O(e^{\lambda_1t})$.
\end{proof}

\begin{lemma}
    Suppose $V(x)$ is a $C^1$ vector field on a neighborhood $U$ of $0$ in $\mathbb R^m$, such that 
    $$
    DV(0)=
    \text{diag}(\lambda_1,...,\lambda_m)
    $$
    where
    $$
    0>\lambda_1\geq...\geq\lambda_m.
    $$
    Suppose 
    $$
    x'(t)=V(x(t)),\quad y'(t)=V(x(t))
    $$ 
    on $[0,\infty)$ and 
    $$
    x(t)=o(1),\quad y(t)=o(1)
    $$ 
    as $t\rightarrow\infty$. 
    If $y(t)-x(t)=O(e^{\lambda_0 t})$ for some $\lambda_0<\lambda_m$, then
    $x(t)=y(t)$.
\end{lemma}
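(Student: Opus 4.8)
The plan is a backward-uniqueness argument of Gr\"onwall type applied to the difference $z(t):=y(t)-x(t)$ and the scalar quantity $\phi(t):=|z(t)|_2^2$; reading the second hypothesis as the evidently intended $y'(t)=V(y(t))$, we have $z'(t)=V(y(t))-V(x(t))$. The conceptual point is that a solution of the perturbed linear system $z'\approx\Lambda z$ cannot decay faster than roughly $e^{\lambda_m t}$, so the hypothesis $z(t)=O(e^{\lambda_0 t})$ with $\lambda_0<\lambda_m$ forces $z\equiv0$.

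First I would record the linearized estimate near $0$. Since $\lambda_0<\lambda_m$, fix $\delta>0$ with $\lambda_m-\delta>\lambda_0$. As $V$ is $C^1$ with $DV(0)=\Lambda$, continuity of $DV$ provides $\rho>0$ with $B_\rho(0)\subseteq U$ and $\|DV(w)-\Lambda\|\leq\delta$ on $B_\rho(0)$; integrating $DV$ along segments then yields
$$
|V(b)-V(a)-\Lambda(b-a)|_2\leq\delta\,|b-a|_2\qquad\text{for all }a,b\in B_\rho(0).
$$
Because $x(t),y(t)=o(1)$, there is $T>0$ with $x(t),y(t)\in B_\rho(0)$ for all $t\geq T$.

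Next comes the differential inequality. For $t\geq T$, writing $r(t):=V(y(t))-V(x(t))-\Lambda z(t)$, which satisfies $|r(t)|_2\leq\delta|z(t)|_2$, we get
$$
\phi'(t)=2z(t)\cdot z'(t)=2\,z(t)^{\top}\Lambda z(t)+2\,z(t)\cdot r(t)\geq 2\lambda_m|z(t)|_2^2-2\delta|z(t)|_2^2=(2\lambda_m-2\delta)\phi(t),
$$
where I used $z^{\top}\Lambda z=\sum_i\lambda_iz_i^2\geq\lambda_m|z|_2^2$ (note it is the \emph{smallest} eigenvalue $\lambda_m$ that enters here, in contrast with Lemma 5.2) together with Cauchy--Schwarz. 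Hence $\psi(t):=e^{-(2\lambda_m-2\delta)t}\phi(t)$ is nondecreasing on $[T,\infty)$. On the other hand $z(t)=O(e^{\lambda_0 t})$ gives $\phi(t)=O(e^{2\lambda_0 t})$, so $\psi(t)=O(e^{(2\lambda_0-2\lambda_m+2\delta)t})\to0$ because $\lambda_0-\lambda_m+\delta<0$. A nonnegative nondecreasing function tending to $0$ is identically $0$, so $\phi\equiv0$ on $[T,\infty)$, i.e.\ $x(t)=y(t)$ for all $t\geq T$. Finally, since $V$ is $C^1$ and hence locally Lipschitz along the trajectory, the Picard--Lindel\"of uniqueness theorem (valid in both time directions) applied at $x(T)=y(T)$ propagates the equality to all of $[0,\infty)$.

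I do not expect a genuine obstacle: the delicate points are getting the direction of the eigenvalue bound right (the \emph{lower} bound $z^{\top}\Lambda z\geq\lambda_m|z|_2^2$, which governs how fast $|z|_2$ may decay) and the concluding use of two-sided ODE uniqueness to upgrade ``$x=y$ for large $t$'' to ``$x=y$ on $[0,\infty)$''.
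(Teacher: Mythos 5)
Your proposal is correct and takes essentially the same route as the paper: derive the Gr\"onwall-type lower bound $(|y-x|_2^2)'\geq c\,|y-x|_2^2$ with $c$ just above $2\lambda_m$ by exploiting that $DV\approx\Lambda$ near $0$, then note that this decay rate is incompatible with $y-x=O(e^{\lambda_0 t})$ for $\lambda_0<\lambda_m$ unless $y\equiv x$. Your write-up is somewhat tidier at two points the paper glosses over — you explicitly separate the linear part $\Lambda z$ from the $\delta|z|_2$ remainder and you invoke backward Picard--Lindel\"of uniqueness to propagate the equality from $[T,\infty)$ back to $[0,\infty)$ — and you correctly read the hypothesis $y'=V(x)$ as the intended $y'=V(y)$, but these are refinements of the same argument rather than a different one.
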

\begin{proof}
Let $f(s)=sy+(1-s)x$. Then
    $$
    V(y)-V(x)=V(f(1))-V(f(0))
    =\int_0^1(V\circ f)'(s)ds
    $$
    and
    $$
    |V(y)-V(x)|_2\leq\int_0^1|(V\circ f)'(s)|_2ds
    $$
    $$
    =\int_0^1|DV(f(s))|_2\cdot|f'(s)|_2ds
    =\max_{0\leq s\leq1}|DV(f(s))|_2\cdot|y-x|_2
    \leq \frac{\lambda_0+\lambda_m}{2}|y-x|_2
    $$
    if $|x|,|y|$ are sufficiently small.
    So for $t$ sufficiently large,
    $$
    (|y-x|_2^2)'=2(y-x)\cdot (V(y)-V(x))
    \geq-2|y-x|_2\cdot|V(y)-V(x)|_2\geq(\lambda_0+\lambda_m)|y-x|_2^2.
    $$
    So $|y-x|_2^2\geq e^{(\lambda_0+\lambda_m)t}|y(0)-x(0)|_2^2$. So we must have $y(0)=x(0)$ if $y(t)-x(t)=O(e^{\lambda_0 t})$.
    
\end{proof}
\begin{lemma}
Suppose $a<0,b<0,a\neq b$, and $x(t),h(t)$ are two functions on $[T,\infty)$ for some $T\in\mathbb R$, such that 
$$
x'(t)=ax(t)+h(t)
$$ 
and 
$$
h(t)=o(e^{bt})
$$ 
as $t\rightarrow\infty$. Then
$$
x(t)=Ce^{at}+o(e^{bt})
$$ 
as $t\rightarrow\infty$ for some constant $C$.
\end{lemma}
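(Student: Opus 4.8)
The plan is to solve the scalar linear ODE explicitly via the integrating factor $e^{-at}$, and then estimate the resulting integral, splitting into cases according to the sign of $b-a$ (recall $a\neq b$).

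First, multiply $x'(t)=ax(t)+h(t)$ by $e^{-at}$ to get $\frac{d}{dt}\big(e^{-at}x(t)\big)=e^{-at}h(t)$. Integrating from $T$ to $t$,
$$
e^{-at}x(t)=e^{-aT}x(T)+g(t),\qquad g(t):=\int_T^t e^{-as}h(s)\,ds .
$$
So it suffices to produce a constant $C_1$ with $g(t)=C_1+o\big(e^{(b-a)t}\big)$ as $t\to\infty$: then
$$
x(t)=\big(e^{-aT}x(T)+C_1\big)e^{at}+e^{at}\,o\big(e^{(b-a)t}\big)=Ce^{at}+o\big(e^{bt}\big).
$$

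To analyze $g$, rewrite the hypothesis as $e^{-as}h(s)=o\big(e^{(b-a)s}\big)$. If $b<a$ then $b-a<0$, so $e^{(b-a)s}$ is integrable on $[T,\infty)$ and $\int_T^\infty e^{-as}h(s)\,ds$ converges absolutely; take $C_1$ to be this integral. Then $g(t)-C_1=-\int_t^\infty e^{-as}h(s)\,ds$, and for $\varepsilon>0$ the bound $|e^{-as}h(s)|\le\varepsilon e^{(b-a)s}$ valid for large $s$ gives $|g(t)-C_1|\le\frac{\varepsilon}{a-b}e^{(b-a)t}$ for large $t$, i.e. $g(t)=C_1+o\big(e^{(b-a)t}\big)$. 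If instead $b>a$ then $b-a>0$ and $e^{(b-a)t}\to\infty$; here take $C_1=0$. Given $\varepsilon>0$ choose $T'\ge T$ with $|e^{-as}h(s)|\le\varepsilon e^{(b-a)s}$ for $s\ge T'$; then for $t\ge T'$,
$$
|g(t)|\le\int_T^{T'}|e^{-as}h(s)|\,ds+\int_{T'}^t\varepsilon e^{(b-a)s}\,ds\le K+\frac{\varepsilon}{b-a}e^{(b-a)t},
$$
where $K=\int_T^{T'}|e^{-as}h(s)|\,ds$ does not depend on $t$. Since $e^{(b-a)t}\to\infty$, also $K\le\varepsilon e^{(b-a)t}$ for large $t$, so $|g(t)|\le\varepsilon\big(1+\tfrac1{b-a}\big)e^{(b-a)t}$ eventually; as $\varepsilon$ is arbitrary, $g(t)=o\big(e^{(b-a)t}\big)$. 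Substituting either case into the displayed formula for $x(t)$ completes the argument.

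There is no real obstacle here: everything reduces to one-variable integral estimates. The only point needing slight care is the case $b>a$, where $g$ need not have a limit, so one cannot set $C_1=\lim_{t\to\infty}g(t)$; instead one exploits the divergence of $e^{(b-a)t}$ both to absorb the fixed constant $K$ and to conclude $g(t)=o\big(e^{(b-a)t}\big)$ directly. (The integration step only uses that $e^{-at}x(t)$ is an antiderivative of $e^{-at}h(t)$ on $[T,\infty)$; in the intended applications $h$ is continuous, so the fundamental theorem of calculus applies verbatim.)
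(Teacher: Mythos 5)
Your proof is correct and follows essentially the same route as the paper: integrate with the factor $e^{-at}$ and split into cases on the sign of $b-a$, using the convergent improper integral $\int_t^\infty e^{-as}h(s)\,ds$ when $b<a$ and a direct growth estimate on $\int_T^t e^{-as}h(s)\,ds$ when $b>a$. You spell out the $\varepsilon$-estimates that the paper leaves implicit, but the decomposition and key computations coincide.
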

\begin{proof}
If $a<b$,
$$
x(t)=e^{at}\int_T^t
e^{-as}h(s)ds+Ce^{at}
$$
$$
=e^{at}\int_T^t
e^{-as}\cdot o(e^{bs})ds+Ce^{at}
$$
$$
=e^{at}\int_T^t
o(e^{(b-a)s})ds+Ce^{at}
$$
$$
=e^{at}\cdot
o(e^{(b-a)t})+Ce^{at}
$$
$$
=
o(e^{bt})+Ce^{at}
$$
for some constant $C$.

If $a>b$,
$$
x(t)=e^{at}\int_\infty^t
e^{-as}h(s)ds+Ce^{at}
$$
$$
=e^{at}\int_\infty^t
e^{-as}\cdot o(e^{bs})ds+Ce^{at}
$$
$$
=e^{at}\int_\infty^t
o(e^{(b-a)s})ds+Ce^{at}
$$
$$
=e^{at}\cdot
o(e^{(b-a)t})+Ce^{at}
$$
$$
=
o(e^{bt})+Ce^{at}
$$
for some constant $C$.

\end{proof}

\subsection{Proof of Theorem 1.3}
\begin{proof}[Proof of Theorem 1.3]

It suffices to prove that for any cell $D$ in the finite cover, $x(t)\in D$ for sufficiently large $t$ or $x(t)\notin D$ for sufficiently large $t$. Let us fix a cell $D$ and assume $0\in D$ without loss of generality.

    By a linear transformation, we may assume that $T_0M$ is tangent to the $(x_1...x_m)$-plane and
    $$
\Lambda:=DV(0)|_{T_0M}=\text{diag}(\lambda_1,...,\lambda_m).
$$
Without loss of generality, we may assume that
$$
M=\{(x_1,...,x_n):(x_1,...,x_m)\in U_0,(x_{m+1},...,x_n)=f(x_1,...,x_m)\}
$$
for some open set $U_0\subset\mathbb R^m$ and analytic function $f$ on $U_0$.
Then $f(0)=0$ and $Df(0)=0$.
Denote
$$
\bar x(t)=(x_1(t),...,x_m(t)),\text{ and}
$$ 
$$
\bar V(x)=(V_1(x),...,V_m(x)).
$$
Then 
$$
\bar x'(t)
=\bar V(\bar x,f(\bar x))=:\tilde V(\bar x).
$$
where
$\tilde V(\bar x)$ is $C^1$ in a neighborhood
of $0$ in $\mathbb R^m$. Furthermore,
$$
D\tilde V(0)=
\left(\Lambda,\frac{\partial \bar V}{\partial (x_{m+1},...,x_n)}(0)\right)
(Id_m,Df(0))^T
=\Lambda.
$$
So by Lemma 5.2 $\bar x(t)=O(e^{\lambda_1 t})$ and then $x(t)=O(e^{\lambda_1t})$. 

Suppose 
$$
p_m(x_1,...,x_n)=(x_1,...,x_m)
$$ 
is the projection from $\mathbb R^n$ to $\mathbb R^m$. Then $\tilde V(\bar x)$ is analytic on $p_m(M\cap D_i)$ for each cell $D_i$ in the finite cover.
Then by Lemma 5.1,
\begin{equation}
\tilde V(\bar x)-\tilde V(\bar y)=\Lambda(\bar x-\bar y)+O((|x|+|y|)|x-y|).
\end{equation}

Let $V_*$ be the analytic vector field on an open superset of $D\cap U$ such that $V_*(x)=V(x)$ on $D\cap U$. Denote
$$
\bar V_*(x)=(V_{*,1}(x),...,V_{*,m}(x))
$$
and
$$
\tilde V_*(\bar x)=\bar V_*(\bar x,f(\bar x)).
$$
Given $c\in\mathbb R^m$, let $\bar y(t;c)$ be the formal $\lambda$-series solution to $\bar y'=\tilde V_*(\bar y)$ as constructed in Section 4.2.

Pick $\lambda_0\in(\lambda_1,0)$ such that 
$
k\lambda_0\neq\lambda_i
$
for all $k\in\mathbb Z_{\geq0}$ and $i=1,...,m$.
Let
$$
A=\{k\in\mathbb Z_{\geq0}:\text{there exists $c\in\mathbb R^m$ such that } \bar x(t)-\bar y(t;c)=o(e^{k\lambda_0t})\text{ as $t\rightarrow\infty$}\}
$$
$A$ is nonempty since $0\in A$. Suppose $D=\cap_iH_i$ where $H_i$'s are half spaces, and denote $y(t;c)=(\bar y(t;c),f(\bar y(t;c)))$ as a $n$-dim $\lambda$-series.

(a) If $A$ has no maximum, then we can find $k\in A$ and $c\in\mathbb R^m$ such that $k\lambda_0<\lambda_m$ and $\bar x(t)-\bar y(t;c)=o(e^{k\lambda_0t})$. Then by Lemma 5.3 $\bar x(t)=\bar y(t;c)$ as $\mathbb R^m$-valued functions. Then by Proposition 3.3 $x(t)=y(t;c)$ as $\mathbb R^n$-valued functions. Then for each $H_i$, the signed distance
$d_s(y(t;c),H_i)$ is a $\lambda$-series and converge for sufficiently large $t$. 
Furthermore, by Proposition 3.3 the limit of the $\lambda$-series $d_s(y(t;c),H_i)$ is indeed the signed distance from $y(t;c)$ to $H_i$.
For each $H_i$,
$d(y(t;c),H_i)\equiv 0$ for sufficiently large $t$ or  
$d(y(t;c),H_i)
=at^qe^{rt}+o(at^qe^{rt})$ for some $a>0$, $q\in\mathbb Z_{\geq0}$ and $r<0$. 
So
$x(t)=y(t;c)\notin D$ for sufficiently large $t$ or $x(t)=y(t;c)\in D$ for sufficiently large $t$.

(b)
If $k=\max A$, let $c$ be such that 
$$
\bar x(t)=\bar y(t;c)+o(e^{k\lambda_0t}).
$$
If  
$$
d( y(t;c),H_i)=at^qe^{r t}+o(at^qe^{r t})
$$ 
for some $H_i$ and $a>0$ and $q\in\mathbb Z_{\geq0}$ and $r\geq k\lambda_0$,
then 
$$
d( x(t),H_i)=at^qe^{r t}+o(at^qe^{r t}).
$$
If not, $d(y(t;c),H_i)=o(e^{k
\lambda_0 t})$ for all $H_i$, and then 
$$
d(y(t;c),D)\leq d(y(t;c),0)=O(e^{\lambda_1t}) 
$$
and by Lemma 1.4 
$$
d(y(t;c),D)=o(e^{k\lambda_0 t}).
$$
Denote $y_*(t)$ as the closest point to $y(t;c)$ in $D$.
Then 
$$
y_*(t)=O(y(t))=O(e^{\lambda_1t})
$$ 
and
$$
x(t)=y(t;c)+o(e^{k\lambda_0t})=y_*(t)+o(e^{k\lambda_0t}),
$$
and by Lemmas 5.1 and 5.2
$$
(\bar x-\bar y)'
=\tilde V(\bar x)-\tilde V_*(\bar y)
$$
$$
=(\tilde V(\bar x)-\tilde V(\bar y_*))
+(\tilde V_*(\bar y_*)-\tilde V_*(\bar y))
$$
$$
=\Lambda(\bar x-\bar y_*)+O((|\bar x|+|\bar y_*|)|\bar x-\bar y_*|)
+\Lambda(\bar y_*-\bar y)+O((|\bar y_*|+|\bar y|)|\bar y_*-\bar y|)
$$
$$
=\Lambda(\bar x-\bar y)+O(e^{\lambda_1 t}\cdot e^{k\lambda_0t})
=\Lambda(\bar x-\bar y)+o(e^{(k+1)\lambda_0t}).
$$
Then by Lemma 5.4 for all $i=1,...,m$, 
$$
\bar x_i(t)-\bar y_i(t;c)=C_ie^{\lambda_it}+o(e^{(k+1)\lambda_0t})
$$
for some constant $C_i$. 
Let $C=(C_1,...,C_m)$, and then $C$ is nonzero by the maximality of $k$. 
Since $\bar x(t)-\bar y(t;c)=o(e^{k\lambda_0t})$, $C_i=0$ if $\lambda_i>k\lambda_0$.
So
$$
\lambda_C:=\max\{\lambda_i:i\in\{1,...,m\},C_i\neq0\}<k\lambda_0.
$$
By Theorem 4.5 (c),
$$
\bar y(t;c+C)=\bar y(t;c)+\sum_{i=1}^mC_ie^{\lambda_it}\vec e_i+o(e^{(\lambda_C+\lambda_0)t})
$$
$$
=\bar x(t;c)+o(e^{(\lambda_C+\lambda_0)t})
=\bar x(t;c)+o(e^{(k+1)\lambda_0t}).
$$ 
This contradicts the maximality of $k$.
\end{proof}

\bibliography{sreference}
\bibliographystyle{amsplain}

\end{document}